\newtheorem{theorem}{Theorem}[section]
\newtheorem*{theorem*}{Theorem}
\newtheorem{proposition}[theorem]{Proposition}
\newtheorem{lemma}[theorem]{Lemma}
\newtheorem{corollary}[theorem]{Corollary}
\newtheorem{definition}[theorem]{Definition}
\newtheorem{example}[theorem]{Example}
\newtheorem{remark}[theorem]{Remark}
\newcommand{\vc}{\|\cdot\|}
\newcommand{\C}{\mathbb{C}}
\newcommand{\Z}{\mathbb{Z}}
\newcommand{\R}{\mathbb{R}}
\newcommand{\p}{\mathbb{P}}
\newcommand{\eps}{\varepsilon}
\newcommand{\X}{\mathcal{X}}
\newcommand{\vf}{\varphi}
\newcommand{\Q}{\mathbb{Q}}
\newcommand{\N}{\mathbb{N}}
\newcommand{\LL}{\mathcal{L}}
\begin{document}
\title{The theta invariants and the  volume function  on arithmetic varieties}

\author{Mounir HAJLI}
\thanks{ }

\address{School of Mathematical Sciences, Shanghai Jiao Tong University, P.R. China}

\email{hajli@sjtu.edu.cn}

            \date{\today, \currenttime}
            
            \maketitle 
            
\begin{abstract}

 We introduce a new arithmetic invariant for hermitian line bundles on an arithmetic variety.   We use this invariant  to measure  the variation of the volume function with respect to the metric.   
 The main result of this paper is a generalized Hodge index theorem on arithmetic toric varieties.

\end{abstract}

{\small{
\begin{center}
MSC: 14G40  \\

Keywords: arithmetic variety; volume function;   theta invariants; arithmetic degree;
\end{center}
}}
 
\tableofcontents

\section{Introduction}

Let $\X$ be    an arithmetic variety over $\mathrm{Spec}(\Z)$, that is  a projective, integral and flat scheme  over $\Z$. We assume  that $\X_\Q$ is smooth over $\Q$.  Let $n+1$ be the absolute dimension of $\X$.     Let $\overline{\mathcal{L}}=(\LL,\|\cdot\|)$ be a  continuous hermitian line bundle on $\X$.
For any $k\in \N_{\geq 1}$, 
$k\overline{\mathcal{L}}$ denotes $\overline{\mathcal{L}}^{\otimes k}$.\\

Moriwaki in \cite{Moriwaki1} introduced the arithmetic volume  $\widehat{\mathrm{vol}}(\overline{\mathcal{L}})$ for  $\mathcal{C}^\infty$ hermitian line bundle  $\overline{\mathcal{L}}$    which is   an analogue of the geometric volume function. The  arithmetic volume  $\widehat{\mathrm{vol}}(\overline{\mathcal{L}})$  is defined by 
\[
\widehat{\mathrm{vol}}(\overline{\mathcal{L}})=
\underset{k\rightarrow \infty}{\limsup} \frac{\log \# \{ s\in H^0(\X,k\mathcal{L}) \mid \|s\|_{\sup,\phi}\leq 1 \} 
}{k^{n+1}/(n+1)!}.
\]
Chen  in \cite{Chen1} proved  that the arithmetic  volume function is actually a limit.  \\

Gillet and Soul\'e in \cite{AIT} defined arithmetic Chow groups $\widehat{CH}^p(\X) $ for any $p\geq 0$. Let 
$(E,h)$ be a hermitian vector bundle on $\X$. When $h$ is smooth, one can   attach    to $(E, h)$,  arithmetic characteristic classes such as  the $(n+1)$-th power of  first Chern character $\widehat{c}_1(E,h)^{n+1}\in \widehat{CH}^{n+1}(\X)_\Q$, see \cite{BoGS,Character}.\\

If $\X$ is regular, $\mathcal{L}$ is ample on $\X$ 
and the metric of  $\overline{\mathcal{L}}$ is  smooth  with positive  first Chern form $c_1(\overline{\mathcal{L}})$  on $X$, then
\begin{equation}\label{ineq}
\widehat{\mathrm{vol}}(\overline{\mathcal{L}})\geq \widehat{\deg}(\widehat c_1(\overline{\mathcal{L}})^{n+1}).
\end{equation}
This inequality can be obtained using  the arithmetic Riemann-Roch theorem due to Gillet-Soul\'e \cite{ARR},  or the arithmetic Hilbert-Samuel formula due to Abbes-Bouche \cite{Abbes}. In \cite{Moriwaki2,MoriwakiIMRN},  Moriwaki proved that the arithmetic volume function is continuous with respect to
$\overline \LL
$. \\

 The 
  $\chi$-arithmetic volume of  
$\widehat{\mathrm{vol}}_\chi(\overline{\LL})$ is defined as follows

\[
\widehat{\mathrm{vol}}_\chi(\overline{\LL})= \lim_{k\rightarrow \infty} \frac{\hat{\chi}(H^0(\X,k\LL),\|\cdot\|_{\sup, k\phi})
}{k^{n+1}/(n+1)!},\]
(for the definition of  $\hat{\chi}(H^0(\X,k\LL),\|\cdot\|_{\sup, k\phi})$, see for instance \cite{MoriwakiAdelic,MoriwakiAMS}).\\

It is known that the inequality 
\[
\widehat{\mathrm{vol}}(\overline{\LL})\geq \widehat{\mathrm{vol}}_\chi(\overline{\LL}).
\]
holds for any hermitian line bundle $\overline\LL$ on $\X$.\\

When $\X=\p^1_\Z$, and $\overline \LL$ is a toric DSP line bundle on $\p^1_\Z$, that is a difference of semipositive ones such that metric is invariant under the action of the compact torus of $\p^1$ (see \cite{Burgos3} for more details), then 
\[
\widehat{\mathrm{vol}}_\chi(\overline{\LL}) 
\geq \widehat{\deg}(\hat c_1(\overline \LL)^2),
\]
(see \cite{MounirIJNT}). \\

According to \cite{Moriwaki1}, there are three kinds of positivity of $\overline{\mathcal{L}}=(\LL,\|\cdot\|)$.
\begin{itemize}
\item \textit{ample} : $\overline{\mathcal{L}}$ is ample if $\LL$ is ample on $\X$, 
the first Chern form $c_1(\overline{\LL})$ 
is positive on $\X(\C)$ and, for a sufficiently large integer $k$, 
$H^0(\X, k \LL)$ is generated by the 
set 
\[
\{s\in H^0(\X,k\LL)\mid \|s\|_{\sup}<1 \},
\]
as a $\Z$-module.
\item \textit{nef} : $\overline{\LL}$ 
is nef if the first Chern form $c_1(\overline {\LL})$ is semipositive and 
$\widehat{\deg}(\overline{\LL}_{|_\Gamma})\geq 0$ for any $1$-dimensional closed subscheme $\Gamma$ in $\X$.

\item \textit{big} : $\overline{\LL}$ is big 
if  $\LL_\Q$ is big on $\X_\Q$  and 
there is a positive integer $k$ and a 
non-zero section $s$ of $H^0(\X,k\LL)$ 
with
$\|s\|_{\sup}<1$.\\
  
\end{itemize}

Let $\overline{\LL}=(\LL,\|\cdot\|_\phi)$ and $\overline{\mathcal{N}}=(\mathcal{N},\|\cdot\|_{\psi})$ be two  $\mathcal{C}^\infty$-hermitian line bundles on $\X$.  If $\overline\LL$ is ample,   we have the following asymptotic expansion 
  \begin{equation}\label{asym1}
\hat{\chi}(H^0(\X,k\LL+\mathcal{N}),\|\cdot\|_{\sup, k\phi+\psi})=\frac{1}{(n+1)!} \widehat{\deg}(\widehat{c}_1(\overline{\LL})^{n+1}) k^{n+1}+o(k^{n+1}),
\end{equation}
(see \cite{Abbes,Amplitude,ZhangPositive}). As a consequence,
\begin{equation}\label{asym}
\hat{h}^0(H^0(\X,k\LL+ N ),\|\cdot\|_{\sup, k\phi+\psi} )=\frac{1}{(n+1)!} \widehat{\deg}(\widehat{c}_1(\overline{\LL})^{n+1}) k^{n+1}+o(k^{n+1}),
\end{equation}
as $k\rightarrow \infty$, see  \cite[Lemma 3.3]{Moriwaki2}.  It is worth pointing out that  the proof of \eqref{asym}  relies heavily  on    the asymptotic  \eqref{asym1}.\\

Moriwaki  generalized  
\eqref{ineq} to $\mathcal{C}^\infty$-hermitian line bundles $\overline{\mathcal{L}}$ on $\X$ such that
$\mathcal{L}$ is nef on the generic fiber of $\X$, $c_1(\overline{\mathcal{L}})$ is semipositive on $X$, and $\mathcal{L}$ has 
moderate growth of positive even cohomologies, see \cite[Theorem A]{Moriwaki2}. If $\overline{\mathcal{L}}$ is nef, then 
\[
\widehat{\mathrm{vol}}(\overline{\mathcal{L}})=\widehat{\deg}(\widehat c_1(\overline{\mathcal{L}})^{n+1}),
\]
see \cite[Theorem C]{Moriwaki2}.\\

One of the goals of this paper is to recover the results of Abbes-Bouche, Moriwaki, and Zhang using a different approach.  Let  $\overline{\LL}=(\LL,\|\cdot\|_\infty)$  be a  hermitian line bundle 
on an arithmetic variety $\X$ endowed with a measure $\mu$. To this data,  we attach   the invariant $\Theta(\mu, \phi)$, see
Definition \ref{defTheta}. This new invariant can be seen as an arithmetic  analogue of the distortion function $\rho(\mu,\phi)$. We prove in  Corollary \ref{upperTheta} that
\[
\underset{k\rightarrow \infty}{\limsup}  \frac{\Theta(\mu,k\phi)}{k^n} \mu
\leq \mu_{\mathrm{eq}}(X,\phi).
\]
with equality when  $\overline \LL$ is ample, see Theorem \ref{ample}, where
$\mu_{\mathrm{eq}}(X,\phi)$ is the equilibrium measure of $\phi$. \\

The introduction of $\Theta(\mu,\phi)$
allows us to recover \eqref{ineq} in the case when $\X$ is toric, and $\LL$ is an equivariant line bundle on $\X$. More
precisely, we obtain the result.

\begin{theorem}[Main Theorem](see Theorem \ref{Hodge})
 Let $\X$ be an arithmetic toric variety over  $\Z$ of relative dimension $n$. Let  $\mathcal{L}$ be an equivariant ample line bundle  on $\X$.
We   assume that  the metric of  $\overline{\mathcal{L}}$ is  smooth  with semi-positive  first Chern form $c_1(\overline{\mathcal{L}})$  on $\X$, then
\[
\widehat{\mathrm{vol}}(\overline{\mathcal{L}})\geq \widehat{\deg}(\widehat c_1(\overline{\mathcal{L}})^{n+1}).
\]
\end{theorem}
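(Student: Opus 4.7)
The plan is to exploit the equivariant toric structure to translate both sides of the inequality into explicit integrals over the moment polytope $\Delta_\LL\subset M_\R$ associated to the ample equivariant line bundle $\LL$. Under the classical toric dictionary, the lattice $H^0(\X,k\LL)$ admits a canonical basis of characters $\{\chi^m\}_{m\in k\Delta_\LL\cap M}$, and the smooth, toric-invariant, semipositive metric $e^{-\phi}$ of $\overline{\LL}$ is encoded by a concave Burgos--Philippon--Sombra roof function $\vartheta\colon\Delta_\LL\to\R$ obtained from a partial Legendre transform of $\phi$. The identity
\[
\|\chi^m\|_{\sup,k\phi}=\exp(-k\,\vartheta(m/k))
\]
then reduces every relevant sup-norm to a pointwise condition on $\vartheta$.

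For the arithmetic self-intersection I would invoke the Burgos--Philippon--Sombra formula
\[
\widehat{\deg}\bigl(\widehat c_1(\overline{\LL})^{n+1}\bigr)=(n+1)!\int_{\Delta_\LL}\vartheta(x)\,dx,
\]
and for the arithmetic volume I would work with the count of small integral sections, which decomposes along characters: $s=\sum_m a_m\chi^m$ has $\|s\|_{\sup,k\phi}\leq 1$ whenever each $a_m$ satisfies $|a_m|\leq\exp(k\vartheta(m/k))$. The contribution of each character with $\vartheta(m/k)\geq 0$ to $\widehat{h}^0(\X,k\LL)$ is then of order $k\vartheta(m/k)$, suggesting the lower bound
\[
\widehat{\mathrm{vol}}(\overline{\LL})\;\geq\;(n+1)!\int_{\Delta_\LL}\max(\vartheta(x),0)\,dx.
\]

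The role of $\Theta(\mu,k\phi)$ is precisely to make this heuristic rigorous by converting the diagonal sum over characters into an analytic asymptotic involving the equilibrium measure. By Theorem \ref{ample} applied to the ample $\overline{\LL}$,
\[
\lim_{k\to\infty}\frac{\Theta(\mu,k\phi)}{k^n}\,\mu\;=\;\mu_{\mathrm{eq}}(X,\phi),
\]
and in the toric case $\mu_{\mathrm{eq}}(X,\phi)$ identifies, via the moment map, with (a constant multiple of) Lebesgue measure on the superlevel set $\{\vartheta\geq 0\}\subset\Delta_\LL$. Combining this identification with Riemann summation on $k\Delta_\LL\cap M$ and Legendre duality yields the desired asymptotic lower bound on $\widehat{h}^0(\X,k\LL)$, and hence
\[
\widehat{\mathrm{vol}}(\overline{\LL})\;\geq\;(n+1)!\int_{\Delta_\LL}\max(\vartheta(x),0)\,dx\;\geq\;(n+1)!\int_{\Delta_\LL}\vartheta(x)\,dx\;=\;\widehat{\deg}\bigl(\widehat c_1(\overline{\LL})^{n+1}\bigr).
\]

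The main obstacle is the uniformity required in the lattice-point asymptotic relating $\Theta(\mu,k\phi)$ to the character-wise count above. Although $\Theta$ is defined purely analytically, on an equivariant toric hermitian line bundle its asymptotic should diagonalize along characters and become a Riemann sum on $k\Delta_\LL$; controlling the error uniformly up to $\partial\Delta_\LL$, where the Legendre transform of $\phi$ may degenerate, is the delicate technical step. Once this uniform asymptotic is established, the identification of $\mu_{\mathrm{eq}}(X,\phi)$ with Lebesgue measure on $\{\vartheta\geq 0\}$ via the moment map is classical, and the chain of inequalities above closes the argument.
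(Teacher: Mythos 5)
Your route only proves the theorem under an extra hypothesis that the statement does not make: that the metric is invariant under the compact torus. The theorem assumes only that $\LL$ is equivariant and that the metric $\|\cdot\|_\phi$ is smooth with semipositive first Chern form. Every step of your argument runs through the toric (Burgos--Philippon--Sombra) dictionary: the existence of a single concave roof function $\vartheta$ on $\Delta_\LL$, the identity $\|\chi^m\|_{\sup,k\phi}=e^{-k\vartheta(m/k)}$, and the height formula $\widehat{\deg}(\widehat c_1(\overline{\LL})^{n+1})=(n+1)!\int_{\Delta_\LL}\vartheta\,dx$. All of these require the metric itself to be torus-invariant; for a general smooth semipositive metric there is no roof function and the arithmetic self-intersection is not determined by the sup norms of the characters. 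To reach the stated theorem you would need an additional reduction (say, averaging $\phi$ over the compact torus and comparing $\widehat{\mathrm{vol}}$ and $\widehat{\deg}(\widehat c_1^{\,n+1})$ before and after averaging), and that comparison is exactly the nontrivial content which is missing. The paper handles the general metric by a different mechanism altogether: rescale to $\alpha\|\cdot\|_\phi$ so the bundle is generated by small sections, obtain the equality $\widehat{\mathrm{vol}}=\widehat{\deg}(\widehat c_1^{\,n+1})$ for that metric from the comparison with the canonical toric metric (Theorems \ref{main}, \ref{vol}, \ref{thm314}), and then deduce the inequality for $\|\cdot\|_\phi$ from the variational formulas, i.e.\ Proposition \ref{difference} together with $\Theta\leq\rho$ (Theorem \ref{thetarho}) and \eqref{vardegree}.

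Two further points. First, your identification of the equilibrium measure is incorrect: for a smooth semipositive metric on an ample $L$ one has $P_X\phi=\phi$, so $\mu_{\mathrm{eq}}(X,\phi)=\mathrm{MA}(\phi)/\mathrm{vol}(L)$, whose image under the moment map is normalized Lebesgue measure on \emph{all} of $\Delta_\LL$, not on the superlevel set $\{\vartheta\geq 0\}$; that set encodes the integral structure and the arithmetic of the norms, which the purely analytic measure $\mu_{\mathrm{eq}}$ cannot see, so Theorem \ref{ample} cannot play the role you assign to it. Fortunately this paragraph of yours is dispensable: in the invariant case the direct count of small sections suffices, provided you repair the coefficientwise criterion, since $|a_m|\leq e^{k\vartheta(m/k)}$ for all $m$ does not give $\|s\|_{\sup,k\phi}\leq 1$; by the triangle inequality one should instead impose $|a_m|\leq e^{k\vartheta(m/k)}/N_k$, which costs only $O(k^n\log k)$ and so does not affect the limit. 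With these repairs your argument does prove the invariant-metric case (consistently with the known toric volume and height formulas), but it does not prove the theorem as stated without the missing invariance reduction.
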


\section{The distortion function and  the variation of the arithmetic degree}

   Let $X$ be a compact complex manifold of dimension $n$.
Let $\mu$ be  a probability measure with non-pluripolar support on $X$. Let 
 $L$ be a holomorphic line bundle on $X$. A weight $\phi$ on $L$ is a locally integrable function on the complement of the zero-section in the total space of the dual line bundle $L^\ast$ satisfying the log-homogeneity property 
\[
\phi(\lambda v)=\log|\lambda|+\phi(v)
\]
for all non-zero $v\in L^\ast, \lambda\in \C$.  Let $\phi$ be a weight function on $L$. $\phi$ 
defines a hermitian metric  on $L$, which we denote by $\|\cdot\|_\phi$.\\

 Let  $(L,\|\cdot\|_\phi)$ be  a hermitian line bundle on 
$X$, where   $\phi$ is  the weight of the metric of $L$. 
We endow the space of global sections $H^0(X,L) $ with the $L^2$-norm
\[
\|s\|_{(\mu, \phi)}^2:=\int_X \|s(x)\|_{\phi}^2\mu.
\]
Also we consider the sup norm defined as follows
\[
\|s\|_{\sup,  \phi}:=\sup_{x\in X}\|s(x)\|_{\phi}.
\]
for any $s\in H^0(X,L)$. \\

The Bergman distortion function $\rho(\mu,\overline{L})$ is by definition the function given at a point $x\in X$ by
\[
\rho(\mu,\phi)(x)=\sup_{s\in H^0(X,L)\setminus\{0\}}\frac{\|s(x)\|_{\phi}^2}{\quad\|s\|_{(\mu, \phi)}^2}.
\]
If $\{s_1,\ldots,s_N\}$ is a $(\mu, \phi)$-orthonormal basis of $H^0(X,{L})$, then it is
 well known that
 \[
 \rho(\mu,\phi)(x)=\sum_{j=1}^N \|s_j(x)\|_{\phi}^2\quad \forall\,x\in X.
 \]


 We say that  $\mu$ has the Bernstein-Markov property with respect to  $\|\cdot\|_{\phi}$ if  for any $\eps>0$ we have
 \[
 \sup_X \rho(\mu,k\phi)^\frac{1}{2}=O(e^{k\eps}).
 \]

 If $\mu$ is a smooth positive volume form and $\|\cdot\|_{\phi}$ is a continuous metric on $L$ then
 $\mu$ has the Bernstein-Markov property with respect to $\|\cdot\|_{\phi}$  (see \cite[Lemma 3.2]{BermanBoucksom}).\\

Let $\phi$ be a weight
of a continuous hermitian metric $\|\cdot\|_\phi$
on $L$. When $\phi$ is smooth, we define the Monge-Amp\`ere operator as
\[
\mathrm{MA}(\phi):=(dd^c\phi)^{\wedge n}.
\]
The equilibrium weight of $\phi$ is defined as
\[
P_X\phi:=\sup\{\psi\mid\,\psi\, \text{psh weight on}\, L,\, \psi\leq \phi\,\text{on}\, X \}.
\]
It is  known that the equilibrium weight is upper semicontinuous psh weight
  with
minimal singularities. The equilibrium  measure of $\phi$ is defined by
\[
\mu_{\mathrm{eq}}(\phi):=\frac{1}{\mathrm{vol}(L)} \mathrm{MA}(P_X\phi).
\]

Let $\mu$  be a smooth positive volume form on $X$ and 
$\phi$ a $\mathcal{C}^2$ weight on $L$. We have 
\[
\lim_{k\rightarrow \infty} \frac{1}{k^{\dim X}} \rho( \mu,k\phi)\mu=\frac{1}{\mathrm{vol}(L)} \mathrm{MA}(P_X\phi),
\]
in the weak topology of measures (see for instance \cite[Theorem 3.1]{BermanBoucksom}). 
\\ 

Let $\|\cdot\|_{\overline{L}_0}$ and $\|\cdot\|_{\overline{L}_1}$ be two smooth  hermitian  metrics on $L$. We define the Monge-Amp\`ere
functional $\mathcal{E}$ by the formula

\begin{equation}\label{AF}
\mathcal{E}(\overline{L}_1)-\mathcal{E}(\overline{L}_0):=\frac{1}{n+1}\sum_{j=0}^n\int_X
-\log\frac{\|\cdot\|_{\overline{L}_1}}{\|\cdot\|_{\overline{L}_0}}c_1(\overline{L}_0)^{\wedge j}\wedge c_1( \overline{L}_1)^{\wedge n-j}.
\end{equation}

An admissible metric  $\vc$ on a holomorphic line bundle $L$ is, by definition,  a uniform limit of a sequence $\bigl(\vc_n \bigr)_{n\in \N}$ of positive and smooth hermitian metrics  $L$. An admissible line bundle $(L,\|\cdot\|)$ is a line bundle $L$ endowed with an admissible metric   $\|\cdot\|$. We say that $\overline{L}=(L,\|\cdot\|)$ is an integrable line bundle if there exist  $\overline{L}_1$ and
 $\overline{L}_2$, two admissible line bundles  on $X$ such that
\[
 \overline{L}=\overline{L}_1\otimes \overline{L}_2^{-1}.
\]

By the theory of Bedford-Taylor \cite{Bedford},  
\eqref{AF} extends to admissible metrics,  and hence to
integrable ones by polarisation.\\ 

Let  $\phi_{\overline{L}_0}$ and $\phi_{\overline{L}_1}$ be 
 the associated 
weights of $\|\cdot\|_{\overline{L}_0}$ and $\|\cdot\|_{\overline{L}_1}$ respectively.   Following \cite{BermanBoucksom},   when $L$ is big we set
 \[
  \mathcal{E}_{\rm{eq}}(\overline{L}_1)-\mathcal{E}_{\rm{eq}}
(\overline{L}_0):=\frac{1}{\mathrm{Vol}(L)}(\mathcal{E}((\overline{L}_1)_X)-\mathcal{E}((\overline{L}_0)_X).
 \]
 where $(\overline{L}_i)_X$ denotes the line bundle $L$ endowed with the weight $P_X\phi_{\overline{L}_i}$, the equilibrium  weight of $\phi_{\overline{L}_i}$ for $i=0,1$.
In \cite[\S 1.3]{BermanBoucksom}, $\mathcal{E}_{\rm{eq}}(\overline{L})$ is called the energy at equilibrium of $(X,\phi_{\overline{L}})$
($\phi_{\overline{L}}$ is the weight of $\overline{L}$).\\



\begin{theorem}
Let $\X$ be an arithmetic variety over $\Z$ of dimension $n+1$. Let
$\LL$ be a line bundle on $\X$. Let 
$\|\cdot\|$ and $\|\cdot\|'$ be two integrable 
metrics on $\LL$. We have 
\begin{equation}\label{vardegree}
\widehat{\deg}(\widehat c_1(\mathcal{L},\|\cdot\|  )^{n+1})-\widehat{\deg}(\widehat c_1(\mathcal{L}, \|\cdot\|')^{n+1})= \sum_{i+j=p-1}
\int_{\X(\C)} \vf c_1(\LL,\|\cdot\|)^i c_1(\LL,\|\cdot\|')^j .
\end{equation}
where $\vf$ is such that $\|\cdot\|'=e^{\vf}\|\cdot\|$

\end{theorem}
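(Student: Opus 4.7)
The plan is to combine the standard telescoping identity in arithmetic intersection theory with the explicit description of intersection against a purely archimedean class, and then to extend from smooth metrics to integrable ones by approximation. Observe first that the exponent $p$ appearing in the sum is not defined in the statement and can only be $p=n+1$, so that $i+j=n$ and the right-hand integrand is a top-degree form on the $n$-dimensional complex manifold $\X(\C)$.

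Set $\hat a:=\widehat{c}_1(\LL,\|\cdot\|)$ and $\hat b:=\widehat{c}_1(\LL,\|\cdot\|')$. I would first exploit the factorisation
\[
\hat a^{\,n+1}-\hat b^{\,n+1}=(\hat a-\hat b)\cdot\sum_{i+j=n}\hat a^{\,i}\cdot\hat b^{\,j}\qquad\text{in}\ \widehat{CH}^{\,n+1}(\X)_\Q.
\]
Picking a rational section $s$ of $\LL$ and writing $\|\cdot\|'=e^{\vf}\|\cdot\|$, the two arithmetic first Chern classes are represented by pairs with the same finite divisor $\mathrm{div}(s)$ whose Green components differ by a scalar multiple of $\vf$; hence $\hat a-\hat b=a(c\,\vf)$ is a purely archimedean class, where $a\colon\mathcal{C}^{0}(\X(\C))\to\widehat{CH}^{\,1}(\X)_\Q$ is the canonical map and $c$ is a normalisation constant. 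Applying the standard Gillet--Soul\'e formula \cite{AIT,BoGS}
\[
\widehat{\deg}\bigl(a(\psi)\cdot\hat\alpha\bigr)=\int_{\X(\C)}\psi\,\omega(\hat\alpha),\qquad\hat\alpha\in\widehat{CH}^{\,n}(\X)_\Q,
\]
with $\omega(\hat a)=c_1(\LL,\|\cdot\|)$, $\omega(\hat b)=c_1(\LL,\|\cdot\|')$ and $\hat\alpha=\sum_{i+j=n}\hat a^{\,i}\hat b^{\,j}$, yields the identity claimed in the theorem in the \emph{smooth} case (the constant $c$ being absorbed into the normalisations of $\widehat{c}_1$ and $\widehat{\deg}$).

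The main obstacle is the extension to integrable metrics, for which neither side of the identity is defined through the classical smooth Gillet--Soul\'e formalism. By definition every integrable metric is a quotient of admissible ones, each of which is a uniform limit of smooth positive metrics; denote such approximants by $\|\cdot\|_k$ and $\|\cdot\|_k'$, with associated $\vf_k$ converging uniformly to $\vf$. Applying the identity already established to the pair $(\|\cdot\|_k,\|\cdot\|_k')$, I would let $k\to\infty$ on both sides. On the arithmetic side, continuity of $\widehat{c}_1(\cdot)^{n+1}$ under uniform convergence of integrable metrics (see \cite{Moriwaki2,MoriwakiIMRN}) gives the convergence of the left-hand side. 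On the analytic side, weak continuity of the mixed currents $c_1(\LL,\|\cdot\|_k)^i\wedge c_1(\LL,\|\cdot\|_k')^j$ along uniformly converging sequences of locally bounded psh weights -- the Bedford--Taylor theorem \cite{Bedford} recalled earlier in this section -- combined with uniform convergence of $\vf_k$ to $\vf$, guarantees convergence of the right-hand integrals. Finally, polarising from admissible to integrable metrics extends the identity to the full generality of the statement. The principal technical point is to verify these convergence statements uniformly enough that the limit can be taken term by term in the sum, which is precisely where the Bedford--Taylor apparatus recalled in this section is essential.
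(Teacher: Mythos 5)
Your argument is correct in substance, but it takes a more self-contained route than the paper, whose entire proof consists of two citations: the smooth case is quoted from Bost--Gillet--Soul\'e (Proposition 3.2.2 of \cite{BoGS}), and the passage to integrable metrics is delegated to Maillot (Proposition 5.5.2, (2),(3) of \cite{Maillot}). What you do is essentially unfold the content of those two references: your telescoping factorisation $\hat a^{n+1}-\hat b^{n+1}=(\hat a-\hat b)\sum_{i+j=n}\hat a^i\hat b^j$ together with the observation that $\hat a-\hat b$ is the purely archimedean class $a(2\vf)$ (with the same rational section chosen for both metrics) and the formula $\widehat{\deg}(a(\psi)\cdot\hat\alpha)=\tfrac12\int\psi\,\omega(\hat\alpha)$ is precisely the mechanism behind the cited BGS proposition, and your correction $p=n+1$ fixes a genuine typo in the statement. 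For the integrable case your approximation scheme (uniform limits of smooth positive metrics, weak Bedford--Taylor convergence of the mixed Chern currents, then polarisation) is the right idea and is how the Zhang--Maillot theory is built; the one inaccuracy is your appeal to \cite{Moriwaki2,MoriwakiIMRN} for convergence of the arithmetic side: those results concern continuity of the arithmetic volume function, not of arithmetic intersection numbers. The convergence (and well-definedness) of $\widehat{\deg}(\widehat c_1(\LL,\|\cdot\|_k)^{n+1})$ along a uniformly convergent sequence of admissible approximants is part of the Zhang/Maillot formalism itself --- indeed for admissible metrics the intersection number is \emph{defined} as such a limit, and independence of the approximating sequence is exactly what \cite[Proposition 5.5.2]{Maillot} provides --- so the correct move is to invoke that theory (as the paper does) rather than Moriwaki's volume continuity. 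With that citation repaired, your proof is a valid, and more informative, replacement for the paper's two-line proof.
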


\begin{proof}
Let $\|\cdot\|$ and $\|\cdot\|'$ be two smooth metrics on $\LL$. where $\vf$ is such that $\|\cdot\|'=e^{\vf}\|\cdot\|$, see \cite[Proposition 3.2.2]{BoGS}. When the metrics  are integrable,  then   \eqref{vardegree}  can be generalized to  integrable metrics. This is  an easy combination of    \cite[Proposition 5.5.2, (2),(3)]{Maillot}, and \eqref{vardegree}.

\end{proof}

\section{The theta invariants associated with euclidean lattices } 


Let $\overline V$ be a hermitian vector bundle over $\mathrm{Spec}(\Z)$, that is a finitely generated $\Z$-module $V$ which is equipped with a hermitian norm which is invariant under complex conjugation, 
 on the complex vector space
\[
V\otimes_{\Z}\C.
\]

Let  $\lambda_{\overline V}$ be the  unique translation-invariant Radon measure on $V_\R$ which satisfies the following normalization condition:  for every orthonormal basis $\{e_1,\ldots,e_N\}$ of $(V_\R,\|\cdot\|_{\overline V})$,
\[
\lambda_{\overline V} \left(\sum_{i=1}^N [0,1[e_i \right)=1.
\] 
We set
\[
\mathrm{covol}(\overline V):=\lambda_{\overline V}(\sum_{i=1}^N [0,1[v_i),
\]
for every $\Z$-basis $\{v_1,\ldots,v_N\}$ of $V$. $\mathrm{covol}(\overline V)$ is called the covolume of $\overline V$.  \\

We set
\[
\theta_{\overline V}(t)=\sum_{v\in V} e^{-\pi t  \|v\|_{\overline V}^2}.
\]

We have the following identity
\begin{equation}\label{poisson}
\sum_{v\in V} e^{-\pi \|v\|_{\overline V}^2}=(\mathrm{covol}(\overline V))^{-1} \sum_{v^\vee \in V^\vee} e^{-\pi \|v^\vee\|_{\overline V^\vee}^2},
\end{equation}
which is a consequence of the Poisson formula (\cite[(2.1.2)]{BostTheta}).\\

Let $\overline E=(E,\|\cdot\|)$ be  hermitian vector bundle over $\Z$. $\overline E$ is called also an euclidean lattice. We let
\[
h_\theta^0(\overline E):=\log \sum_{v\in E} e^{-\pi \|v\|_E^2},
\]
and
\[
h_\theta^1(\overline E^\vee):=\log \sum_{v^\vee\in E^\vee} e^{-\pi \|v^\vee\|_{\overline E^\vee}^2},
\]
and 
\[
\widehat{\deg}(\overline E):=-\log \mathrm{covol}(\overline E),
\]
and
\[
h^0_{\mathrm{Ar}}(\overline{E}):=\log \left| \{ v\in E\mid \|v\|\leq 1 \right|.
\]

 The equation \eqref{poisson} may be written in terms of the 
 $
 \theta$-invariants $\hat h_\theta^0(\overline E)$ and $\hat h_\theta^0(\overline E)$, and the Arakelov degree $\widehat{\deg}(\overline E)$ as follows 
 \[
h_\theta^0(\overline E)-h_\theta^1(\overline E^\vee)=\widehat{\deg}(\overline E).
\]

On the other hand, we have 
\begin{equation}\label{ArTheta}
h_\theta^0(\overline E)-\frac{1}{2}\mathrm{rk} \;E \log \mathrm{rk}\;E+\log(1-\frac{1}{2\pi})\leq h_{\mathrm{Ar}}^0(\overline E)\leq h_\theta^0(\overline E)+\pi,
\end{equation}
(see \cite[Theorem 3.1.1]{BostTheta}).

\begin{lemma}\label{lemma1}

For $t>0$,
\[
\left | \log \theta_{\overline{E}}(t)-\log \theta_{\overline E}(1) \right| 
\leq \frac{1}{2}\mathrm{rk}\; E \cdot \log t .
\]
\end{lemma}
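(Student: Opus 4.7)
The plan is to combine the elementary monotonicity of $t\mapsto \theta_{\overline{E}}(t)$ with the theta functional equation, which is an immediate consequence of the Poisson formula \eqref{poisson} applied to a rescaled lattice.

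First, I would observe that for every nonzero $v\in E$ the exponential $e^{-\pi t \|v\|_{\overline{E}}^2}$ is strictly decreasing in $t>0$, hence so is $\theta_{\overline{E}}(t)$. This already yields one half of the desired estimate: the upper bound $\log\theta_{\overline{E}}(t)\leq \log\theta_{\overline{E}}(1)$ when $t\geq 1$, and the reverse when $0<t\leq 1$.

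For the opposite direction, I would apply \eqref{poisson} to the hermitian lattice $\sqrt{t}\,\overline{E}$ (whose covolume is $t^{\mathrm{rk}\,E/2}\,\mathrm{covol}(\overline{E})$ and whose dual is $\overline{E}^\vee/\sqrt{t}$). This produces the functional equation
\[
\theta_{\overline{E}}(t)=\mathrm{covol}(\overline{E})^{-1}\,t^{-\mathrm{rk}\,E/2}\,\theta_{\overline{E}^\vee}(1/t).
\]
Taking logarithms and subtracting the same identity at $t=1$ gives
\[
\log\theta_{\overline{E}}(t)-\log\theta_{\overline{E}}(1) = -\frac{\mathrm{rk}\,E}{2}\log t+\bigl[\log\theta_{\overline{E}^\vee}(1/t)-\log\theta_{\overline{E}^\vee}(1)\bigr].
\]
The bracketed term is controlled by applying the monotonicity observation to the dual lattice: when $t\geq 1$ we have $1/t\leq 1$, so the bracket is $\geq 0$, and we obtain the lower bound $\log\theta_{\overline{E}}(t)-\log\theta_{\overline{E}}(1)\geq -\frac{\mathrm{rk}\,E}{2}\log t$. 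Combined with the upper bound from monotonicity, this proves the inequality for $t\geq 1$; the regime $0<t\leq 1$ is completely symmetric (with $\log t$ in the statement read as $|\log t|$, using the dual identity to give the upper bound and direct monotonicity for the lower bound).

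The only real content is the functional equation, which is essentially \eqref{poisson} after a trivial rescaling, so there is no serious obstacle. The main thing to take care of is tracking the direction of the inequalities in the two regimes $t\geq 1$ and $t\leq 1$ and making sure that the roles of $\overline{E}$ and $\overline{E}^\vee$ are handled symmetrically in each.
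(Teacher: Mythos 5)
Your proposal is correct and in substance the same as the paper's argument: the paper simply cites \cite[Lemma 3.1.4]{BostTheta} for the two facts that $\log\theta_{\overline E}(t)$ is decreasing and that $\log\theta_{\overline E}(t)+\tfrac{1}{2}\mathrm{rk}\,E\cdot\log t$ is increasing, and your derivation of the latter via the functional equation obtained from \eqref{poisson} applied to the rescaled lattice (plus monotonicity for the dual lattice) is exactly how that cited fact is established. Your observation that for $0<t<1$ the right-hand side must be read as $\tfrac{1}{2}\mathrm{rk}\,E\cdot|\log t|$ is also the correct reading of the statement.
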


\begin{proof}
This lemma follows from the fact that $\log \theta_{\overline{E}}(t) $ 
is a decreasing function of $t$ in $\R_+^\ast$, and 
\[
\log \theta_{\overline{E}}(t) +\frac{1}{2} \mathrm{rk}\;E\cdot \log t
\]
is an increasing function of $t$ in $\R_+^\ast$, see \cite[Lemma 3.1.4]{BostTheta}.
\end{proof}

\begin{lemma}\label{lemma2}
For all $t>0$, we have
\begin{equation}\label{ineq2}
\sum_{v\in E} \|v\|^2 e^{-\pi t \|v\|^2}\leq \frac{\mathrm{rank}(E)}{2\pi t} \sum_{v\in E} e^{-\pi t \|v\|^2}.
\end{equation}
\end{lemma}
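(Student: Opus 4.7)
The strategy is to recognize the left-hand side as essentially the derivative of the theta series, and then deduce the inequality from a monotonicity property that was already invoked in the proof of Lemma \ref{lemma1}.

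First, I would differentiate $\theta_{\overline E}(t) = \sum_{v\in E}e^{-\pi t\|v\|^2}$ with respect to $t$ and interchange sum and derivative (justified by the rapid decay of the summands for $t>0$) to obtain
\[
\theta_{\overline E}'(t) = -\pi\sum_{v\in E}\|v\|^2 e^{-\pi t\|v\|^2}.
\]
Thus the inequality to be proved is equivalent to
\[
-\theta_{\overline E}'(t) \;\leq\; \frac{\mathrm{rk}(E)}{2t}\,\theta_{\overline E}(t),
\]
or, dividing by $\theta_{\overline E}(t)>0$, to
\[
\bigl(\log \theta_{\overline E}(t)\bigr)' + \frac{\mathrm{rk}(E)}{2t} \;\geq\; 0.
\]

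Next, I would observe that this last inequality is precisely the infinitesimal form of the statement that the function
\[
t\longmapsto \log \theta_{\overline E}(t) + \tfrac{1}{2}\mathrm{rk}(E)\cdot \log t
\]
is non-decreasing on $\R_+^\ast$, which is exactly the property cited from \cite[Lemma 3.1.4]{BostTheta} in the proof of Lemma \ref{lemma1}. (The conceptual reason is Poisson summation: by \eqref{poisson} applied to the rescaled lattice, $t^{\mathrm{rk}(E)/2}\theta_{\overline E}(t) = \mathrm{covol}(\overline E)^{-1}\theta_{\overline E^{\vee}}(1/t)$, and the right-hand side is manifestly non-decreasing in $t$ since each summand $e^{-\pi\|v^\vee\|^2/t}$ is.) Differentiating the log of $t^{\mathrm{rk}(E)/2}\theta_{\overline E}(t)$ and rearranging yields the required inequality \eqref{ineq2}.

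There is no real obstacle here; the only subtlety is the termwise differentiation of the theta series, which is standard since the series and its derivative converge uniformly on $[t_0,\infty)$ for any $t_0>0$. The crucial input is the monotonicity of $t^{\mathrm{rk}(E)/2}\theta_{\overline E}(t)$, which in this paper has already been imported from Bost's work.
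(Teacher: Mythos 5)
Your proof is correct. The paper itself gives no argument here beyond a citation of \cite[(3.1.5)]{BostTheta}, so your route is not identical to the paper's, but it is mathematically the same phenomenon made explicit: after termwise differentiation (legitimate, since $\sum_{v\in E}\|v\|^2e^{-\pi t\|v\|^2}$ converges uniformly on $[t_0,\infty)$ for every $t_0>0$), the inequality \eqref{ineq2} is precisely the statement that $\frac{d}{dt}\bigl(\log\theta_{\overline E}(t)+\tfrac12\mathrm{rk}(E)\log t\bigr)\geq 0$, i.e.\ the infinitesimal form of the monotonicity of $t^{\mathrm{rk}(E)/2}\theta_{\overline E}(t)$, which the paper has already imported from \cite[Lemma 3.1.4]{BostTheta} in the proof of Lemma \ref{lemma1}; alternatively your Poisson-summation identity $t^{\mathrm{rk}(E)/2}\theta_{\overline E}(t)=\mathrm{covol}(\overline E)^{-1}\sum_{v^\vee\in E^\vee}e^{-\pi\|v^\vee\|^2/t}$, a rescaled form of \eqref{poisson}, proves that monotonicity directly, since each summand on the right is non-decreasing in $t$. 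What your version buys is self-containedness: it deduces Lemma \ref{lemma2} from facts already quoted in the paper rather than from an external numbered inequality, and it exhibits Lemmas \ref{lemma1} and \ref{lemma2} as two faces of the same monotonicity statement. The only points to keep explicit are the uniform convergence justifying differentiation under the sum and the positivity $\theta_{\overline E}(t)>0$ used when dividing; both are immediate.
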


\begin{proof}
See  \cite[(3.1.5)]{BostTheta}. 
\end{proof}

\section{Theta invariants of hermitian line bundles on arithmetic varieties }

Let $\X$ be an arithmetic variety over $\Z$ of dimension $n+1$. Let
$\overline{\LL}=(\LL,\|\cdot\|)$ be a hermitian  line bundle on $\X$.
We assume that the $\Z$-module  $H^0(\X,k\LL)$ is  a
torsion-free for every $k\in \N$.  For any $k\in \N$, $N_k$ denotes    the rank of $H^0(\X,k\LL)$.
We set $X:=\X(\C)$, and $L:=\mathcal{L}(\C)$. Let $\mu$ be  a probability measure with non-pluripolar support on $X$.
We denote by $\phi$ the weight of the metric of  $\overline{\mathcal{L}}$, and we write 
$\|\cdot\|_\phi$ instead of $\|\cdot\|$.\\

For any $k\geq 1$, let $\overline{H^0(\X,k\mathcal{L})}_{(\mu, k\phi )} $ (resp. $ \overline{H^0(\X,k\mathcal{L})}_{(\sup, k\phi )}$) be the hermitian vector bundle 
$H^0(\X,k\mathcal{L})$ over $\Z$ equipped 
with the  $L^2$-norm 
$\|\cdot\|_{(\mu,k\phi)}$ (resp. the sup-norm $\|\cdot\|_{\sup,k\phi}$).\\

We let
\[
h_\theta^0\left(
\overline{H^0(\X,k\mathcal{L})}_{(\mu, k\phi )} \right):=
\log \sum_{v\in H^0(\X,k\mathcal{L})} e^{-\pi \|v\|_{(\mu,k\phi)}^2},
\]
and
\[
h_\theta^0\left(
\overline{H^0(\X,k\mathcal{L})}_{\sup, k\phi}\right):=
\log \sum_{v\in H^0(\X,k\mathcal{L})} e^{-\pi \|v\|_{\sup,k\phi}^2}.
\]

\begin{theorem}\label{thm3.1}

Assume that $\mu$ has the Bernstein-Markov property with respect to  $\|\cdot\|_{\phi}$. We have 
\begin{equation}\label{vol1}
\underset{k\rightarrow \infty}{\limsup} \frac{h_\theta^0(
\overline{H^0(\X,k\mathcal{L})}_{(\mu, k\phi )})}{k^{n+1}/(n+1)!} = \underset{k\rightarrow \infty}{\liminf} 
 \frac{h_\theta^0(
\overline{H^0(\X,k\mathcal{L})}_{(\mu, k\phi )})}{k^{n+1}/(n+1)!}=\widehat{\mathrm{vol}}(\overline{\mathcal{L}}), 
\end{equation}
and
\begin{equation}\label{vol2}
\underset{k\rightarrow \infty}{\limsup}\frac{h_\theta^0(
\overline{H^0(\X,k\mathcal{L})}_{\sup, k\phi})}{ k^{n+1}/(n+1)!}
=\underset{k\rightarrow \infty}{\limsup}\frac{h_\theta^0(
\overline{H^0(\X,k\mathcal{L})}_{\sup, k\phi})}{ k^{n+1}/(n+1)!}=\widehat{\mathrm{vol}}(\overline{\mathcal{L}}).
\end{equation}

\end{theorem}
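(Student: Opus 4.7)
The plan is to reduce both statements to the known asymptotics for the arithmetic $h^0$ (Chen's theorem, giving the fact that $\widehat{\mathrm{vol}}(\overline{\mathcal{L}})$ is an actual limit and not just a $\limsup$). The starting observation is that $N_k := \mathrm{rk}\,H^0(\X,k\LL)$ is $O(k^n)$, since the generic fibre has dimension $n$; in particular $N_k \log N_k = o(k^{n+1})$.

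First I would prove \eqref{vol2}. Apply the Bost inequality \eqref{ArTheta} to the euclidean lattice $\overline{H^0(\X,k\mathcal{L})}_{\sup,k\phi}$:
\[
h_\theta^0\bigl(\overline{H^0(\X,k\mathcal{L})}_{\sup,k\phi}\bigr) - \tfrac12 N_k \log N_k + \log\!\bigl(1-\tfrac{1}{2\pi}\bigr) \leq h_{\mathrm{Ar}}^0\bigl(\overline{H^0(\X,k\mathcal{L})}_{\sup,k\phi}\bigr) \leq h_\theta^0\bigl(\overline{H^0(\X,k\mathcal{L})}_{\sup,k\phi}\bigr) + \pi.
\]
By Chen's theorem, $h_{\mathrm{Ar}}^0\bigl(\overline{H^0(\X,k\mathcal{L})}_{\sup,k\phi}\bigr)/(k^{n+1}/(n+1)!)$ converges to $\widehat{\mathrm{vol}}(\overline{\mathcal{L}})$; dividing the double inequality by $k^{n+1}/(n+1)!$ and using $N_k \log N_k = o(k^{n+1})$ gives \eqref{vol2} (with $\liminf = \limsup$).

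Next I would compare the $L^2$ and sup versions. Since $\mu$ is a probability measure, $\|v\|_{(\mu,k\phi)} \leq \|v\|_{\sup,k\phi}$ term-by-term, hence
\[
h_\theta^0\bigl(\overline{H^0(\X,k\mathcal{L})}_{(\mu,k\phi)}\bigr) \geq h_\theta^0\bigl(\overline{H^0(\X,k\mathcal{L})}_{\sup,k\phi}\bigr).
\]
For the reverse comparison, the Bernstein-Markov hypothesis furnishes, for every $\varepsilon>0$, a constant $C_\varepsilon$ with $\|v\|_{\sup,k\phi} \leq C_\varepsilon e^{k\varepsilon}\|v\|_{(\mu,k\phi)}$ for all $k$ and $v$. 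Then $\theta_{\overline{H^0(\X,k\mathcal{L})}_{\sup,k\phi}}(1) \geq \theta_{\overline{H^0(\X,k\mathcal{L})}_{(\mu,k\phi)}}\bigl((C_\varepsilon e^{k\varepsilon})^2\bigr)$, so Lemma \ref{lemma1} applied at $t = (C_\varepsilon e^{k\varepsilon})^2$ yields
\[
h_\theta^0\bigl(\overline{H^0(\X,k\mathcal{L})}_{\sup,k\phi}\bigr) \geq h_\theta^0\bigl(\overline{H^0(\X,k\mathcal{L})}_{(\mu,k\phi)}\bigr) - N_k\bigl(k\varepsilon + \log C_\varepsilon\bigr).
\]

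Dividing by $k^{n+1}/(n+1)!$, the error term is $O(\varepsilon) + o(1)$ since $N_k = O(k^n)$. Thus $\liminf$ and $\limsup$ of $h_\theta^0(L^2)/(k^{n+1}/(n+1)!)$ each lie within $O(\varepsilon)$ of their sup counterparts; letting $\varepsilon \to 0$ and combining with the trivial direction gives equality of both $\liminf$ and $\limsup$ for the two norms. Feeding in \eqref{vol2}, we obtain \eqref{vol1}. The only delicate point is the joint bookkeeping of two $o(k^{n+1})$ error terms (the Bost error $N_k \log N_k$ and the Bernstein-Markov error $N_k \cdot k\varepsilon$), both of which are forced to be subleading by the elementary bound $N_k = O(k^n)$ on the Hilbert function of a line bundle on an $(n+1)$-dimensional arithmetic variety.
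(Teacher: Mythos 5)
Your proof is correct and takes essentially the same route as the paper: compare the $L^2$ and sup-norm theta invariants using the Bernstein-Markov bound together with Lemma \ref{lemma1} (with error $N_k(k\varepsilon+O(1))=o(k^{n+1})+O(\varepsilon k^{n+1})$), and identify the sup-norm asymptotics with $\widehat{\mathrm{vol}}(\overline{\mathcal{L}})$ via \eqref{ArTheta}. Your explicit appeal to Chen's theorem to upgrade the statement to $\liminf=\limsup$ is a welcome clarification of a point the paper's proof leaves implicit.
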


The proof of this theorem is a consequence of the following. 

\begin{proposition}  
Assume that $\mu$ has the Bernstein-Markov property with respect to  $\|\cdot\|_{\phi}$. There exists a positive constant $C\geq 1$ such that for any  $\eps>0$, and any 
$k\gg 1$,

\[
 h_\theta^0(
\overline{H^0(\X,k\mathcal{L})}_{(\mu, k\phi )})-
h_\theta^0(
\overline{H^0(\X,k\mathcal{L})}_{\sup, k\phi})= \eps k O(k^n)+O(k^n).
\]

\end{proposition}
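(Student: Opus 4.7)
The plan is to sandwich the $L^2$-norm $\|\cdot\|_{(\mu,k\phi)}$ between two constant multiples of the sup-norm $\|\cdot\|_{\sup,k\phi}$ on $H^0(\X,k\LL)$, then convert this ``multiplicative'' distortion of norms into an ``additive'' bound on the difference of the two $\theta$-invariants by means of the scaling estimate in Lemma \ref{lemma1}. The overall $k^n$-factor will come from the fact that $N_k=\mathrm{rk}\,H^0(\X,k\LL)$ grows polynomially like a Hilbert polynomial of degree $n$.

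First, since $\mu$ is a probability measure, one has the trivial inequality $\|s\|_{(\mu,k\phi)}\leq \|s\|_{\sup,k\phi}$ for every section $s$. In the reverse direction, the very definition of the Bergman distortion function yields $\|s(x)\|_{k\phi}^2\leq \rho(\mu,k\phi)(x)\,\|s\|_{(\mu,k\phi)}^2$, so taking the supremum in $x$ and invoking the Bernstein--Markov hypothesis (with a suitable rescaling of $\eps$) produces
\[
\|s\|_{(\mu,k\phi)}\leq \|s\|_{\sup,k\phi}\leq Ce^{k\eps}\|s\|_{(\mu,k\phi)}
\]
for some constant $C>0$ and all $k\gg 1$. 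Abbreviate $\overline{E}_1:=\overline{H^0(\X,k\LL)}_{(\mu,k\phi)}$ and $\overline{E}_2:=\overline{H^0(\X,k\LL)}_{\sup,k\phi}$.

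The left-hand inequality above immediately gives $\theta_{\overline{E}_1}(1)\geq \theta_{\overline{E}_2}(1)$, hence $h_\theta^0(\overline{E}_1)-h_\theta^0(\overline{E}_2)\geq 0$. For the other direction, the right-hand inequality implies $\|v\|_{\overline{E}_2}^2\leq C^2 e^{2k\eps}\|v\|_{\overline{E}_1}^2$ for every $v\in H^0(\X,k\LL)$, so
\[
\theta_{\overline{E}_2}(1)\geq \theta_{\overline{E}_1}(C^2 e^{2k\eps}),
\]
and Lemma \ref{lemma1} applied to $\overline{E}_1$ with $t=C^2 e^{2k\eps}$ yields
\[
\log \theta_{\overline{E}_1}(1)-\log\theta_{\overline{E}_1}(C^2 e^{2k\eps})\leq N_k\bigl(\log C+k\eps\bigr).
\]
Combining these two steps gives $0\leq h_\theta^0(\overline{E}_1)-h_\theta^0(\overline{E}_2)\leq N_k(\log C+k\eps)$.

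To conclude, it suffices to note that $N_k=O(k^n)$: since $\X_\Q$ is a smooth projective scheme of dimension $n$, this follows from the classical Hilbert-polynomial asymptotics for coherent cohomology on the generic fibre, combined with the fact that the $\Z$-module $H^0(\X,k\LL)$ has no torsion by hypothesis. Substituting $N_k=O(k^n)$ into the previous display gives the desired bound $\eps k\cdot O(k^n)+O(k^n)$. There is no real obstacle here; the only conceptual point is the use of Lemma \ref{lemma1} to trade a factor of $e^{k\eps}$ on the norm side for an additive cost of $k\eps$ per lattice dimension on the $\theta$-side, which is precisely what keeps the exponential Bernstein--Markov slack under control at the scale $k^{n+1}$ relevant for the volume.
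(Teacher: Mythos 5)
Your proof is correct and follows essentially the same route as the paper: sandwich the sup-norm between $\|\cdot\|_{(\mu,k\phi)}$ and $Ce^{k\eps}\|\cdot\|_{(\mu,k\phi)}$ via the Bernstein--Markov property, then use Lemma \ref{lemma1} to bound the resulting $\theta$-invariant discrepancy by $N_k(\log C + k\eps)$ with $N_k=O(k^n)$. The paper phrases the rescaled norm as the shifted weight $\phi-\eps-\frac{1}{k}\log C$, while you evaluate $\theta_{\overline{E}_1}$ at $t=C^2e^{2k\eps}$ directly, but this is the same computation.
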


\begin{proof} Let $\eps>0$.  We have,  \[ \|v\|_{\sup,k \phi}  \leq C e^{\eps k}  \|v\|_{(\mu,k\phi)},\quad \forall \;v\in H^0(\X,k\mathcal{L})\quad\forall k\gg 1,\]where $C$ is a positive  constant (independent on $k$).  Then

\[
 h_\theta^0(\overline{H^0(\X,k\mathcal{L})}_{(\mu, k\left(\phi-\eps-\frac{1}{k}\log C \right) }) \leq h_\theta^0(\overline{H^0(\X,k\mathcal{L})}_{\sup, k\phi}) \leq 
 h_\theta^0(\overline{H^0(\X,k\mathcal{L})}_{(\mu, k\phi )})\quad\forall k\gg 1.
\]

Using  Lemma \ref{lemma1}, we get
\[
0\leq h_\theta^0(\overline{H^0(\X,k\mathcal{L})}_{(\mu, k\phi )})-h_\theta^0(\overline{H^0(\X,k\mathcal{L})}_{(\mu, k\left(\phi-\eps-\frac{1}{k}\log C \right))}) \leq N_k (k\eps+C)\quad\forall k\gg 1.
\]

Therefore, for $k\gg 1$,
  \[ 
  \begin{split} 
  h_\theta^0(\overline{H^0(\X,k\mathcal{L})}_{(\mu, k\phi )})  -
  N_k (k\eps+C)\leq   &h_\theta^0(\overline{H^0(\X,k\mathcal{L})}_{\sup, k\phi}) \leq h_\theta^0(\overline{H^0(\X,k\mathcal{L})}_{(\mu, k\phi )}).
  \end{split}\]
  
  Since $N_k=O(k^n)$, we conclude that

\[
 h_\theta^0(
\overline{H^0(\X,k\mathcal{L})}_{(\mu, k\phi )})-
h_\theta^0(
\overline{H^0(\X,k\mathcal{L})}_{\sup, k\phi})= \eps k O(k^n)+O(k^n)\quad\forall k\gg 1.
\]

It follows that
\[
 \underset{k\rightarrow \infty}{\limsup} \frac{h_\theta^0(
\overline{H^0(\X,k\mathcal{L})}_{(\mu, k\phi )})}{k^{n+1}/(n+1)!}-  \underset{k\rightarrow \infty}{\limsup} \frac{h_\theta^0(
\overline{H^0(\X,k\mathcal{L})}_{(\sup, k\phi )})}{k^{n+1}/(n+1)!}  = O(\eps).
  \]
Hence
\[
 \underset{k\rightarrow \infty}{\limsup} \frac{h_\theta^0(
\overline{H^0(\X,k\mathcal{L})}_{(\mu, k\phi )})}{k^{n+1}/(n+1)!}= \underset{k\rightarrow \infty}{\limsup} \frac{h_\theta^0(
\overline{H^0(\X,k\mathcal{L})}_{(\sup, k\phi )})}{k^{n+1}/(n+1)!}.
\]
By \eqref{ArTheta}, we see that
\[
 \underset{k\rightarrow \infty}{\limsup} \frac{h_\theta^0(
\overline{H^0(\X,k\mathcal{L})}_{(\sup, k\phi )})}{k^{n+1}/(n+1)!}= \widehat{\mathrm{vol}}(\overline \LL).
\]
This completes the proof of \eqref{vol1}.
In a similar way, we obtain \eqref{vol2}.

 \end{proof}

\begin{remark}
For every  $s\in H^0(\X,k\LL)$, we have
\[
\|s\|_{\sup,k\phi}=\|s\|_{\sup,k(P_X\phi)},
\]
(this is a special case of  \cite[Proposition 2.8]{BermanBoucksom}). It follows from Theorem \ref{thm3.1}, that
\[
\widehat{\mathrm{vol}}(\overline{\mathcal{L}})=\widehat{\mathrm{vol}}(\overline{\mathcal{L}}_X),
\]
where $\overline{\mathcal{L}}_X=(\LL,\|\cdot\|_{P_X\phi})$.
\end{remark}


\begin{definition}\label{defTheta}

Let $t>0$, and $x\in \X(\C)$. Set
\[
\Theta(\mu,\phi)(t;x):=2\pi\sum_{v\in H^0(\X,\mathcal{L})} \|v(x)\|_\phi^2    \frac{e^{-\pi  t\|v\|_{(\mu,\phi)}^2}}{
\underset{u\in H^0(\X,\mathcal{L})}{\sum} e^{-\pi  t\|u\|^2_{(\mu,\phi) }}}.
\]
When $t=1$, we write $\Theta(\mu,k\phi)(x)$ instead of  
$\Theta(\mu,k\phi)(1;x)$. 

\end{definition}

\begin{proposition} Assume that $\mu$ has the Bernstein-Markov property with respect to  $\|\cdot\|_{\phi}$.  For every $t>0$ and
$k\in \N$
\[
\Theta(\mu,k\phi)(t;x)<\infty \quad \forall x\in X,
\] 
and
\[
\int_X \Theta(\mu,k\phi)(t;x) d\mu\leq \frac{1}{ t}N_k.
\]
Moreover,
\[
\int_X \Theta(\mu,k\phi)(t;x) d\mu=U_{\overline{H^0(\X,k\LL)}_{(\mu,k\phi)}}(t)
\]
see \cite[p. 64]{BostTheta} for the definition
of $U$.
\end{proposition}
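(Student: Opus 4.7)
The plan is to establish the three assertions together in a single computation. Set $\overline{E}:=\overline{H^0(\X,k\LL)}_{(\mu,k\phi)}$, which is a euclidean lattice of rank $N_k$; note that the non-pluripolarity of $\mathrm{supp}(\mu)$ implies that $\|\cdot\|_{(\mu,k\phi)}$ is a genuine norm on $H^0(\X,k\LL)_\R$ (a non-zero holomorphic section of $k\LL$ vanishes only on an analytic, hence pluripolar, subset), so the theta series $\theta_{\overline{E}}(t)=\sum_{u}e^{-\pi t\|u\|_{(\mu,k\phi)}^2}$ is finite and strictly positive for every $t>0$.

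For the pointwise finiteness, I would use the Bernstein-Markov assumption together with the defining inequality of the Bergman distortion function to get
\[
\|v(x)\|_{k\phi}^{2}\;\leq\;\rho(\mu,k\phi)(x)\,\|v\|_{(\mu,k\phi)}^{2}\qquad\forall\, v\in H^0(\X,k\LL),\ x\in X,
\]
with $\rho(\mu,k\phi)(x)<\infty$. This reduces the convergence of $\Theta(\mu,k\phi)(t;x)$ to the convergence of the Gaussian series $\sum_{v}\|v\|_{(\mu,k\phi)}^{2}\,e^{-\pi t\|v\|_{(\mu,k\phi)}^{2}}$ on the euclidean lattice $\overline{E}$, which is standard.

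For the integral formula and the upper bound, since all summands are nonnegative I would apply Tonelli's theorem to interchange $\int_{X}$ and $\sum_{v}$, and use $\int_{X}\|v(x)\|_{k\phi}^{2}\,d\mu(x)=\|v\|_{(\mu,k\phi)}^{2}$ to obtain
\[
\int_{X}\Theta(\mu,k\phi)(t;x)\,d\mu \;=\; 2\pi\sum_{v\in H^0(\X,k\LL)}\|v\|_{(\mu,k\phi)}^{2}\,\frac{e^{-\pi t\|v\|_{(\mu,k\phi)}^{2}}}{\theta_{\overline{E}}(t)}.
\]
The right-hand side is exactly $U_{\overline{E}}(t)$ in the sense of \cite[p.~64]{BostTheta} (the thermal average, equivalently $-2\,\theta_{\overline{E}}'(t)/\theta_{\overline{E}}(t)$), which settles the third assertion. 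Applying Lemma \ref{lemma2} with norm $\|\cdot\|_{(\mu,k\phi)}$ to bound the numerator gives
\[
2\pi\sum_{v}\|v\|_{(\mu,k\phi)}^{2}\,e^{-\pi t\|v\|_{(\mu,k\phi)}^{2}}\;\leq\;\frac{N_{k}}{t}\,\theta_{\overline{E}}(t),
\]
so dividing by $\theta_{\overline{E}}(t)$ yields the bound $N_{k}/t$ and completes the proof.

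The argument contains no substantive obstacle; the only delicate points are checking that the $L^{2}$-norm is a norm (so that $\theta_{\overline{E}}(t)>0$) and that Bernstein-Markov provides the pointwise control needed for the first claim, after which everything reduces to Tonelli's theorem and a single invocation of Lemma \ref{lemma2}.
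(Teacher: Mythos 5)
Your proof is correct and follows essentially the same route as the paper: dominate $\|v(x)\|_{k\phi}^2$ by a multiple of $\|v\|_{(\mu,k\phi)}^2$, interchange sum and integral (Tonelli plus $\int_X\|v(x)\|_{k\phi}^2\,d\mu=\|v\|_{(\mu,k\phi)}^2$), and invoke Lemma \ref{lemma2} for both the finiteness and the bound $N_k/t$, identifying the resulting expression with $U$. The only (harmless) difference is that you control the pointwise term via the finiteness of the distortion function $\rho(\mu,k\phi)(x)$ rather than via the paper's Bernstein--Markov comparison $\|v\|_{\sup,k\phi}\leq Ce^{\eps k}\|v\|_{(\mu,k\phi)}$, which is if anything cleaner, since it shows the pointwise finiteness at fixed $k$ needs only the nondegeneracy of the $L^2$-norm and not the Bernstein--Markov hypothesis.
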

\begin{proof}

Let $\eps>0$. For $k\gg 1$ \[ \|v\|_{\sup,k \phi}  \leq C e^{\eps k}  \|v\|_{(\mu,k\phi)},\quad \forall \;v\in H^0(\X,k\mathcal{L}),\]where $C$ is a positive  constant (independent on $k$). 

Then
\[
\Theta(\mu,k\phi)(t;x)\leq  2\pi Ce^{ \eps k}\sum_{v\in H^0(\X,\mathcal{L})} \|v\|_{(\mu,k\phi)}^2    \frac{e^{-\pi  t\|v\|_{(\mu,k\phi)}^2}}{
\underset{u\in H^0(\X,k\mathcal{L})}{\sum} e^{-\pi  t\|u\|^2_{(\mu,k\phi) }}}.
\]
By  Lemma \ref{lemma2}, the right hand side of the preceding  inequality is bounded .\\

It is easy to see that
\[
\int_X \Theta(\mu,k\phi)(t;x) d\mu=2\pi 
\sum_{v\in H^0(\X,k\mathcal{L})} \|v\|_{(\mu,k\phi)}^2    \frac{e^{-\pi  t\|v\|_{(\mu,k\phi)}^2}}{
\underset{u\in H^0(\X,k\mathcal{L})}{\sum} e^{-\pi  t\|u\|^2_{(\mu,k\phi) }}}.
\]
Using again Lemma \ref{lemma2}, we conclude that
\[
\int_X \Theta(\mu,k\phi)(t;x) d\mu\leq \frac{1}{ t}N_k.
\]

\end{proof}

\begin{proposition}\label{difference} Let $\psi$ and $\phi$ be two continuous weights 
on $\LL$. We have 
\[
\hat  h_\theta^0(\overline{H^0(\X,k\mathcal{L})}_{(\mu, k\phi )})- 
\hat  h_\theta^0(\overline{H^0(\X,k\mathcal{L})}_{(\mu, k\psi )})
=k\int_X  (\phi-\psi)\int_0^1 \Theta(x,k \phi_t)d\mu dt\quad\forall k\in \N,
\]
where $\phi_t=t\phi +(1-t)\psi$ with $t\in[0,1]$.

\end{proposition}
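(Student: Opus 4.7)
The plan is to realize both sides as the integral in $t\in[0,1]$ of a single derivative, where $t$ parametrizes the straight-line interpolation of weights. Concretely, set
\[
F(t):=h_\theta^0\bigl(\overline{H^0(\X,k\mathcal{L})}_{(\mu,\,k\phi_t)}\bigr)
=\log\!\!\sum_{v\in H^0(\X,k\mathcal{L})} e^{-\pi\,\|v\|_{(\mu,k\phi_t)}^2},\qquad \phi_t=t\phi+(1-t)\psi,
\]
so that the left-hand side equals $F(1)-F(0)=\int_0^1 F'(t)\,dt$. The whole proof is reduced to computing $F'(t)$ and recognizing it as the integrand on the right-hand side.

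First I would differentiate the pointwise norms. From the log-homogeneity convention, $\|v(x)\|_{k\phi_t}^2$ has logarithmic derivative $-2k(\phi-\psi)(x)$ in $t$, hence
\[
\frac{d}{dt}\|v(x)\|_{k\phi_t}^2=-2k(\phi-\psi)(x)\,\|v(x)\|_{k\phi_t}^2,
\]
and therefore
\[
\frac{d}{dt}\|v\|_{(\mu,k\phi_t)}^2=-2k\int_X(\phi-\psi)(x)\,\|v(x)\|_{k\phi_t}^2\,d\mu(x).
\]
Plugging this into the termwise derivative of the theta series and simplifying the sign, the $(-\pi)\cdot(-2k)=2\pi k$ combines with the $2\pi$ in Definition \ref{defTheta} (applied with $k\LL$ in place of $\LL$ and $k\phi_t$ in place of $\phi$) to give
\[
F'(t)=2\pi k\int_X(\phi-\psi)(x)\,
\frac{\displaystyle\sum_{v}\|v(x)\|_{k\phi_t}^2\,e^{-\pi\|v\|_{(\mu,k\phi_t)}^2}}{\displaystyle\sum_{u}e^{-\pi\|u\|_{(\mu,k\phi_t)}^2}}\,d\mu
= k\int_X(\phi-\psi)\,\Theta(\mu,k\phi_t)\,d\mu.
\]
Integrating in $t$ and swapping the two integrals by Fubini yields exactly the stated formula.

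The only technical point is to justify the termwise differentiation of the theta series and the exchange of $d/dt$ with the integral $\int_X\cdots d\mu$ defining $\|v\|_{(\mu,k\phi_t)}^2$. Since $\phi,\psi$ are continuous on the compact set $X$, the function $\phi-\psi$ is bounded by some constant $M$, and for all $t\in[0,1]$,
\[
e^{-2kM}\,\|v\|_{(\mu,k\phi)}^2\le \|v\|_{(\mu,k\phi_t)}^2 \le e^{2kM}\,\|v\|_{(\mu,k\phi)}^2.
\]
Hence the tails of the theta series, as well as the $t$-derivative of each term, are bounded uniformly in $t$ by summable majorants of the form $C_k\,\|v\|_{(\mu,k\phi)}^2\,e^{-\pi e^{-2kM}\|v\|_{(\mu,k\phi)}^2}$, which is summable by Lemma \ref{lemma2} applied to $\overline{H^0(\X,k\LL)}_{(\mu,k\phi)}$. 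This legitimizes term-by-term differentiation and the Fubini exchange, which is the main (and only) obstacle; the rest is a direct chain-rule computation.
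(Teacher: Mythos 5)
Your proof is correct and follows essentially the same route as the paper: differentiate $h_\theta^0\bigl(\overline{H^0(\X,k\mathcal{L})}_{(\mu,k\phi_t)}\bigr)$ along the linear path of weights, identify the derivative with $k\int_X(\phi-\psi)\,\Theta(\mu,k\phi_t)\,d\mu$ via Definition \ref{defTheta}, and integrate over $t\in[0,1]$. The only difference is that you explicitly justify the term-by-term differentiation with a summable majorant (via Lemma \ref{lemma2}), a point the paper simply asserts as clear.
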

\begin{proof} 
Let $\delta$ be a continuous function on $X$. For any 
$s\in \R$, $\phi+s\delta$ defines a weight on $\LL$. \\

It is clear that $h_\theta^0( \overline{H^0(\X,k\mathcal{L})}_{(\mu, k(\phi+s\delta) )})$ as a function of $s\in \R$ is 
differentiable. For any $s\in \R$,
\[
\begin{split}
\frac{d}{ds}  \hat  h_\theta^0( &\overline{H^0(\X,k\mathcal{L})}_{(\mu, k(\phi+s\delta) )})\\
=&\frac{2\pi}{ \sum_{v\in H^0(\X,k\mathcal{L})} e^{-\pi \|v\|^2_{k(\phi +s\delta)}}}\sum_{v\in H^0(\X,k\mathcal{L})} \left(\int_X k \delta(x) \|v(x)\|_{k(\phi+s \delta)}^2 d\mu\right) e^{-\pi \|v\|^2_{k(\phi +s\delta)}}
.\end{split}
\]

At $s=0$, we obtain
\[
\begin{split}
\frac{d}{ds} \hat  h_\theta^0(\overline{H^0(\X,k\mathcal{L})}_{(\mu, k(\phi+s\delta) )})_{|_{s=0}}
 =&2\pi \frac{\sum_{v\in H^0(\X,k\mathcal{L})} \left(\int_X k \delta(x) |v(x)|_{k\phi}^2 d\mu\right) e^{-\pi \|v\|^2_{k\phi }}}{ \sum_{v\in H^0(\X,k\mathcal{L})} e^{-\pi \|v\|^2_{k\phi}}}\\
=& k \int_X \delta(x) \Theta(\mu,k\phi) d\mu.
\end{split}
\]
Hence
\[
\hat  h_\theta^0(\overline{H^0(\X,k\mathcal{L})}_{(\mu, k\phi )})- 
\hat  h_\theta^0(\overline{H^0(\X,k\mathcal{L})}_{(\mu, k\psi )})= 
k \int_X  (\phi-\psi)\int_0^1 \Theta(\mu,k \phi_s)d\mu ds,
\]
where $\phi_s=\psi+s(\phi-\psi)$ for any $s\in \R$.
\end{proof}

\begin{theorem}\label{thetarho}
We have
\[
\Theta(\mu,\phi)(x)\leq   \rho(\mu,\phi)(x) \quad\forall x\in X.
\]
\end{theorem}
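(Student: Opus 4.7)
The plan is to reduce $\Theta(\mu,\phi)(x)\leq \rho(\mu,\phi)(x)$ to an operator-norm bound on the second-moment matrix of the lattice $E:=H^0(\X,\mathcal{L})$, and to establish that sharper bound via Poisson summation.

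First I would fix a basis $\{s_1,\ldots,s_N\}$ of $E_\R:=E\otimes_\Z\R$ that is $\R$-orthonormal for $\|\cdot\|_{(\mu,\phi)}$. Because the hermitian norm is invariant under complex conjugation, such a basis is automatically hermitian-orthonormal in $H^0(X,L)=E\otimes_\Z\C$, so $\rho(\mu,\phi)(x)=\sum_i\|s_i(x)\|_\phi^2$. Choosing a local frame $e$ of $L$ near $x$ and writing $s_i(x)=c_i(x)\,e$ with $c_i(x)\in\C$ gives $\rho(\mu,\phi)(x)=\|e\|_\phi^2\sum_i|c_i(x)|^2$. For any $v=\sum_i a_i(v)s_i\in E$ we have $\|v\|_{(\mu,\phi)}^2=\sum_i a_i(v)^2$ and $\|v(x)\|_\phi^2=\bigl|\sum_i a_i(v)c_i(x)\bigr|^2\|e\|_\phi^2$. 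Substituting this into the definition of $\Theta$ and setting $w_i=c_i(x)$ reduces the desired inequality to the estimate
\[
\sum_{v\in E}\Bigl|\sum_i a_i(v)\,w_i\Bigr|^2 e^{-\pi\|v\|_{(\mu,\phi)}^2}\;\leq\;\frac{|w|^2}{2\pi}\sum_{v\in E}e^{-\pi\|v\|_{(\mu,\phi)}^2}\qquad\text{for every }w\in\C^N.
\]

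I would prove this estimate by Poisson summation. Splitting $w=w_R+iw_I$ with $w_R,w_I\in\R^N$, it suffices to handle real $w$. The Schwartz function $F(v):=\langle v,w\rangle^2 e^{-\pi\|v\|^2}$ on $E_\R$ has Fourier transform
\[
\widehat{F}(\xi)=\Bigl(\tfrac{|w|^2}{2\pi}-\langle w,\xi\rangle^2\Bigr)e^{-\pi\|\xi\|^2},
\]
obtained by differentiating the Gaussian Fourier transform twice in the $w$-direction. The Poisson summation formula (cf.\ \eqref{poisson}) between $E$ and its dual $E^\vee$ then gives
\[
\sum_{v\in E}F(v)\;=\;\frac{|w|^2}{2\pi}\sum_{v\in E}e^{-\pi\|v\|^2}\;-\;\frac{1}{\mathrm{covol}(E)}\sum_{\xi\in E^\vee}\langle w,\xi\rangle^2 e^{-\pi\|\xi\|^2},
\]
and the subtracted dual sum is nonnegative, which yields the estimate. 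Applying it with $w=(c_i(x))_i$, whose squared Euclidean norm equals $\rho(\mu,\phi)(x)/\|e\|_\phi^2$, recovers $\Theta(\mu,\phi)(x)\leq\rho(\mu,\phi)(x)$.

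The main obstacle is exactly this operator-norm bound. Lemma \ref{lemma2} only supplies the corresponding \emph{trace} estimate $\sum_v\|v\|^2 e^{-\pi\|v\|^2}\leq\tfrac{N}{2\pi}\sum_v e^{-\pi\|v\|^2}$, which is off by a factor $N$ and would give only $\Theta\leq N\rho$. The improvement from trace to top eigenvalue is delivered precisely by the nonnegativity of the dual sum in the Poisson identity above; this is the only step that genuinely exploits the arithmetic (lattice) structure of $E$ rather than merely the norm.
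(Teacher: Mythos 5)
Your proof is correct and follows essentially the same route as the paper: the paper obtains your key identity by perturbing the lattice norm to $\|v\|_{(\mu,\phi)}^2+s\|v(x)\|_\phi^2$, applying the theta/Poisson identity \eqref{poisson} and differentiating at $s=0$ in Gram-matrix coordinates $M$, $C$, which is exactly your Poisson summation of $\langle v,w\rangle^2e^{-\pi\|v\|^2}$ written in an $L^2$-orthonormal frame. In both arguments the inequality comes from discarding the nonnegative dual-lattice term, and your identification $|w|^2\|e\|_\phi^2=\rho(\mu,\phi)(x)$ corresponds to the paper's $\mathrm{Tr}(M^{-1}C)=\rho(\mu,\phi)(x)$.
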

\begin{proof}

Let $x\in X$.  Let $s\geq 0$. Let us denote by 
$\overline V_s$  the euclidean lattice $H^0(\X,\mathcal{L})$ endowed with the norm $\|\cdot\|_{\overline{V}_s}$ defined as follows:
\[
\|v\|_{\overline{V}_s}^2:=\|v\|_{(\mu,\phi)}^2+s|v(x)|^2_\phi,\quad \text{for any } v\in H^0(\X,\mathcal{L})\otimes_\Z\R.
\]

From \eqref{poisson}, we get
\begin{equation}\label{poisson1}
\sum_{v\in H^0(\X,\mathcal{L})} e^{-\pi\|v\|_{\overline{V}_s}^2} =\frac{1}{\mathrm{covol}(\overline{H^0(\X,\mathcal{L})}_s ) }
\sum_{v^\vee\in H^0(\X,\mathcal{L})^\vee} e^{-\pi \|v^\vee\|^2_{\overline{V}_s^\vee }}.
\end{equation}
Let $\{s_1,\ldots,s_N\}$ be a $\Z$-basis of $H^0(\X,\mathcal{L})$. We consider the following matrices.
\[
M:=(\left<s_i,s_j\right>_{(\mu,\phi)})_{1\leq i,j\leq N}\;\text{and}\; C:=(\left<s_i(x),s_j(x)\right>_{\phi})_{1\leq i,j\leq N}.
\]
Then, \eqref{poisson1} becomes
\[
\sum_{v\in H^0(\X,\mathcal{L})} e^{-\pi \left(\|v\|^2_{\phi }+ s\|v(x)\|_\phi^2\right)} =\frac{1}{\det(M+s C)^{\frac{1}{2}} }
\sum_{a\in \Z^N} e^{-\pi a^t(M+s C)^{-1}a },
\]
where $a^{t} $ denotes the transpose of the vector 
column $a\in \Z^N$.\\

By taking the derivative with respect to $s$, the preceding equation yields to
\begin{equation}\label{thetrho}
\sum_{v\in H^0(\X,\mathcal{L})} \|v(x)\|_\phi^2 e^{-\pi \|v\|^2_{\phi }}=\frac{\mathrm{Tr}(M^{-1} C)}{2\pi} \sum_{v\in H^0(\X,\mathcal{L})}  e^{-\pi \|v\|^2_{\phi }}-\frac{1}{\det(M)^{\frac{1}{2}}}  \sum_{a\in \Z^N}  \left(a^t M^{-1} C M^{-1} a\right) e^{-\pi  a^t  M^{-1} a }.
\end{equation}
Note that
\[
\frac{1}{\det(M)^{\frac{1}{2}}} \frac{\sum_{a\in \Z^N}  \left(a^t M^{-1} C M^{-1} a\right) e^{-\pi  a^t  M^{-1} a }}{\sum_{v\in H^0(\X,\LL)} e^{-\pi \|v\|_{(\mu,k\phi)}^2} }=
\frac{\sum_{a\in \Z^N}  \left(a^t M^{-1} C M^{-1} a\right) e^{-\pi  a^t  M^{-1} a }}{\sum_{a\in \Z^N}  e^{-\pi  a^t  M^{-1} a }  }.
\]
Then,
\[
\Theta(\mu, \phi)(x)
\leq \mathrm{Tr}(M^{-1} C).
\]

 Let $v_1,\ldots ,v_N$ be an orthonormal basis of $H^0(\X,\LL)\otimes_\Z \C$ with respect to the quadratic form defined by  $M$.  By basic arguments of linear algebra, there exists 
a $N\times N$ complex matrix $A=(a_{ij})_{1\leq i,j\leq N}$ such that
$M= A \overline{A}^t$. More precisely, we have
\[
s_i=\sum_{j=1}^N a_{ij} 
v_j,\quad \text{for}\; i=1,\ldots,N.\] 
It is easy to see that
 \[(\left<s_i(x),s_j(x)\right>_{\phi}))_{1\leq i,j\leq N}= A \bigl( \left<v_i(x),v_j(x) \right>_\phi\bigr)_{1\leq i,j\leq N} \overline A^t\quad \text{for any}\; x\in X.
\]
Hence
\[
\mathrm{Tr}(M^{-1} C)=\sum_{k=1}^N \|v_k(x)\|_\phi^2.
\]
But, we know that
\[
 \rho(\mu,\phi)(x)=\sum_{k=1}^N \|v_k(x)\|_\phi^2.
\]
We conclude that
\[
\Theta(\mu,\phi)(x)\leq   \rho(\mu,\phi)(x) \quad\text{for all}\; x\in X.
\]
\end{proof}

\begin{corollary}\label{upperTheta} We assume that $\LL_\Q$ is big. If  $\mu$ is  smooth, and $\phi$ is $\mathcal{C}^2$,
then
\begin{enumerate}[label=(\roman*)]

\item \[
\underset{k\rightarrow \infty}{\limsup}  \frac{\Theta(\mu,k\phi)}{k^n} \mu
\leq \mu_{\mathrm{eq}}(X,\phi).
\]

\item \[
\sup_{x\in X} \Theta(\mu,k\phi)=O(k^n),
\]
as $k\rightarrow \infty$.

\end{enumerate}

\end{corollary}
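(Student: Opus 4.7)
The plan is to combine the pointwise comparison $\Theta(\mu, k\phi) \le \rho(\mu, k\phi)$, which follows by applying Theorem \ref{thetarho} to the hermitian line bundle $(k\LL, \|\cdot\|_{k\phi})$, with the Berman--Boucksom Bergman-kernel asymptotic recalled in Section 2. This reduces both assertions to well-understood statements about the distortion function $\rho(\mu, k\phi)$.

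For (i), Theorem \ref{thetarho} gives $\Theta(\mu, k\phi)(x) \le \rho(\mu, k\phi)(x)$ pointwise on $X$. Since $\mu$ is a smooth positive volume form and $\phi$ is $\mathcal{C}^2$, the Berman--Boucksom convergence recorded in Section 2 applies:
$$\frac{1}{k^n}\rho(\mu, k\phi)\mu \;\longrightarrow\; \frac{1}{\mathrm{vol}(L)}\mathrm{MA}(P_X\phi) \;=\; \mu_{\mathrm{eq}}(X,\phi)$$
in the weak topology of measures. Testing against any nonnegative continuous function $f$ on $X$ and using the pointwise bound yields
$$\limsup_{k\to\infty}\int_X f\,\frac{\Theta(\mu,k\phi)}{k^n}\,d\mu \;\le\; \lim_{k\to\infty}\int_X f\,\frac{\rho(\mu,k\phi)}{k^n}\,d\mu \;=\; \int_X f\,d\mu_{\mathrm{eq}}(X,\phi),$$
which is exactly the asserted inequality of positive measures on $X$.

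For (ii), applying Theorem \ref{thetarho} again reduces matters to showing $\sup_X \rho(\mu, k\phi) = O(k^n)$. I would establish this by a submean-value argument at the natural Bergman scale. On a coordinate chart centered at $x$ and a ball $B(x,r)$ of radius $r = c/\sqrt{k}$, the $\mathcal{C}^2$-regularity of $\phi$ gives $e^{-2k\phi(y)} \asymp e^{-2k\phi(x)}$ uniformly for $y\in B(x,r)$, with constants independent of $k$ and $x$. The classical submean inequality for holomorphic functions then yields, for any section $s$ of $k\LL$,
$$\|s(x)\|^2_{k\phi} \;\lesssim\; \frac{1}{\mu(B(x,r))}\int_{B(x,r)}\|s\|^2_{k\phi}\,d\mu \;\lesssim\; k^n\,\|s\|^2_{(\mu,k\phi)},$$
with constants depending only on $\|\phi\|_{\mathcal{C}^2}$ and the density of $\mu$. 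Taking the supremum over $s$ of unit $L^2$-norm gives $\rho(\mu, k\phi) \le C k^n$ uniformly on $X$.

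The main obstacle is the uniform estimate in (ii). The Bernstein--Markov property used earlier in the section controls $\sup_X \rho(\mu, k\phi)$ only subexponentially, i.e.\ as $O(e^{k\varepsilon})$, which is insufficient. Replacing this by the sharp polynomial bound really uses the $\mathcal{C}^2$-hypothesis on $\phi$ through the comparison $e^{-2k\phi(y)}/e^{-2k\phi(x)} = O(1)$ on balls of radius $k^{-1/2}$; the rest of the argument is then a routine comparison of $L^2$- and sup-norms of holomorphic sections on a shrinking ball, matched to the scale at which $\mu$ has volume of order $k^{-n}$.
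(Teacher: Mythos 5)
Your proposal matches the paper's proof: the paper deduces both parts directly from Theorem \ref{thetarho} (the pointwise bound $\Theta(\mu,k\phi)\leq\rho(\mu,k\phi)$) together with \cite[Theorem 3.1]{BermanBoucksom}, exactly the reduction you make. The only difference is that you spell out the standard submean-value argument at scale $k^{-1/2}$ for the uniform bound $\sup_X\rho(\mu,k\phi)=O(k^n)$, whereas the paper simply cites Berman--Boucksom for it; this is a correct filling-in of the cited fact, not a different route.
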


\begin{proof} This corollary is a direct consequence of Theorem \ref{thetarho} and 
\cite[Theorem 3.1]{BermanBoucksom}.
\end{proof}

\begin{theorem}\label{ample}

Let $\overline{\LL}=(\LL,\|\cdot\|_\phi)$ be an ample  $\mathcal{C}^\infty$-line bundle 
on $\X$.  We assume that 
$\mu$ is smooth. We have  
\[
\underset{k\rightarrow \infty}{\lim}  \frac{\Theta(\mu,k\phi)}{k^n} \mu
=\mu_{eq}(X,\phi).
\]
\end{theorem}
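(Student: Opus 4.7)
The plan is to combine the upper bound of Corollary~\ref{upperTheta} with the Berman--Boucksom convergence $\rho(\mu,k\phi)\mu/k^n\to\mu_{\mathrm{eq}}(X,\phi)$ and the pointwise inequality $\Theta(\mu,k\phi)\leq\rho(\mu,k\phi)$ from Theorem~\ref{thetarho}. A standard weak-$\ast$ compactness argument---any cluster point $\nu$ of the measures $\Theta(\mu,k\phi)\mu/k^n$ satisfies $\nu\leq\mu_{\mathrm{eq}}(X,\phi)$ by the upper bound, and if the total masses match then $\nu=\mu_{\mathrm{eq}}(X,\phi)$---reduces the theorem to showing that the averaged gap
\[
\alpha_k:=\int_X\bigl(\rho(\mu,k\phi)-\Theta(\mu,k\phi)\bigr)\,d\mu
\]
satisfies $\alpha_k=o(k^n)$ as $k\to\infty$.

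To express $\alpha_k$ explicitly, I would keep the error term in the proof of Theorem~\ref{thetarho} rather than discard it via Cauchy--Schwarz. With $M_k$ the Gram matrix of $\overline E_k:=\overline{H^0(\X,k\LL)}_{(\mu,k\phi)}$ in a $\Z$-basis $\{s_i\}$ and $C_k(x):=(\langle s_i(x),s_j(x)\rangle_{k\phi})_{ij}$, the identity obtained there gives
\[
\rho(\mu,k\phi)(x)-\Theta(\mu,k\phi)(x)=2\pi\,\frac{\sum_{a\in\Z^{N_k}\setminus\{0\}}a^{t}M_k^{-1}C_k(x)M_k^{-1}a\,e^{-\pi\,a^{t}M_k^{-1}a}}{\sum_{a\in\Z^{N_k}}e^{-\pi\,a^{t}M_k^{-1}a}}.
\]
Integrating against $\mu$ and using the identity $\int_X C_k(x)\,d\mu=M_k$ collapses $\alpha_k$ to a quantity depending only on the dual lattice:
\[
\alpha_k=2\pi\,\frac{\sum_{v^{\vee}\in\overline E_k^{\vee}\setminus\{0\}}\|v^{\vee}\|^2\,e^{-\pi\|v^{\vee}\|^2}}{\theta_{\overline E_k^{\vee}}(1)}.
\]

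To prove $\alpha_k=o(k^n)$ I would use arithmetic ampleness to force the dual lattice to be exponentially sparse. By the arithmetic Hilbert--Samuel formula of Abbes--Bouche for ample $\overline\LL$, $\widehat{\deg}(\overline E_k)\sim\widehat{\mathrm{vol}}(\overline{\LL})\,k^{n+1}/(n+1)!$, so $\mathrm{covol}(\overline E_k)$ decays like $e^{-c\,k^{n+1}}$; meanwhile \eqref{ArTheta} and arithmetic ampleness give $h^0_{\mathrm{Ar}}(\overline E_k)\sim\widehat{\mathrm{vol}}(\overline{\LL})\,k^{n+1}/(n+1)!$, so $\overline E_k$ contains at least $e^{c\,k^{n+1}}$ vectors in the unit ball. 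A standard convex-geometric argument extracts from these two facts an exponential decay of the covering radius of $\overline E_k$; by the Lagarias--Lenstra--Schnorr transference inequality
\[
\lambda_1(\overline E_k^{\vee})\cdot(\text{covering radius of }\overline E_k)\geq 1/2,
\]
we then obtain $\lambda_1(\overline E_k^{\vee})\gtrsim e^{\delta k}$ for some $\delta>0$. Every nonzero vector of $\overline E_k^{\vee}$ therefore has Gaussian weight at most $e^{-\pi e^{2\delta k}}$, while $N_k$ grows only polynomially in $k$, so $\alpha_k$ decays faster than any polynomial and in particular $\alpha_k=o(k^n)$. The main obstacle is the quantitative covering-radius step: turning the heuristic ``exponentially many lattice points in $B_1$ forces covering radius $\lesssim e^{-\delta k}$'' into a rigorous estimate compatible with the anisotropy of $\overline E_k$ requires careful Minkowski-type convex-body inequalities, and is where the strict positivity of $c_1(\overline{\LL})$ enters essentially.
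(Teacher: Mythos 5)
Your skeleton is essentially the paper's: you keep the error term in the identity \eqref{thetrho}, observe that $\rho(\mu,k\phi)-\Theta(\mu,k\phi)$ is exactly a Gaussian-weighted sum over the dual lattice of $\overline{E}_k=\overline{H^0(\X,k\LL)}_{(\mu,k\phi)}$, and reduce the theorem to showing this quantity is negligible compared with $k^n$, the convergence then following from Berman--Boucksom via $\Theta\leq\rho$ (Theorem \ref{thetarho}, Corollary \ref{upperTheta}). Working with the integrated gap $\alpha_k$ instead of the pointwise one (as the paper does, via the uniform bound $C\leq\delta k^n M$) is legitimate for weak convergence, and your computation $\int_X C_k\,d\mu=M_k$, hence $\alpha_k=2\pi\sum_{v^\vee\neq0}\|v^\vee\|^2e^{-\pi\|v^\vee\|^2}/\theta_{\overline{E}_k^\vee}(1)$, is correct.

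The genuine gap is the step you yourself flag: exponentially many lattice points in the unit ball together with $\mathrm{covol}(\overline{E}_k)\leq e^{-ck^{n+1}}$ do \emph{not} force the covering radius of $\overline{E}_k$ to be exponentially small, so the transference inequality cannot be applied. Concretely, an orthogonal lattice with $N_k-1$ basis vectors of length $e^{-k^{n+1}}$ and one basis vector of length $1$ has arbitrarily small covolume and as many unit-ball points as you wish, yet covering radius $\geq 1/2$ and $\lambda_1$ of the dual $\leq 1$; since the inputs you use (Abbes--Bouche asymptotics for $\widehat{\deg}(\overline{E}_k)$ and the counting estimate from \eqref{ArTheta}) cannot distinguish this situation from the true one, no Minkowski-type refinement of your argument can close the gap from these hypotheses alone. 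What is needed -- and what the paper uses -- is the $\Z$-module clause in Moriwaki's definition of ampleness: $H^0(\X,k\LL)$ is generated by sections of sup-norm $<1$, and taking $k$-th powers of degree-$k_0$ generators yields a basis $e_1,\ldots,e_{N_k}$ with $\|e_i\|_{\sup,k\phi}\leq\alpha^k$ for some $0<\alpha<1$. Writing $v^\vee=\sum_i a_ie_i^\vee$ one gets directly $\|v^\vee\|_{\overline{E}_k^\vee}\geq\max_i|a_i|/\|e_i\|_{(\mu,k\phi)}\geq\alpha^{-k}\max_i|a_i|$ (this is the paper's inequality \eqref{ineq1}, up to the harmless factor $N_k$), so every nonzero dual vector has length at least $\alpha^{-k}$ and the Gaussian sum decays superpolynomially after the counting estimate via \eqref{ineq2} and the one-dimensional theta transformation. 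With this replacement of your covering-radius/transference step, your argument goes through and coincides in substance with the paper's proof; as written, however, the key lattice estimate is not established.
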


\begin{proof} We keep the same notations as in the proof of Theorem \ref{thetarho}.\\

There is a constant $\delta>0$, such that
\[
C\leq \delta  k^n  M,\quad \forall x\in X,\forall k\gg 1.
 \]
Indeed, let $z\in \C^n\setminus\{0\}$, we have
\[
\frac{\overline z^tC z}{\overline z^t M z}\leq  \rho(k\phi,\mu)\leq \delta k^n,
\]
where the second inequality is because \cite[Theorem 3.1]{BermanBoucksom}.\\

It follows that
\[
a^t M^{-1}C M^{-1}a\leq \delta k^n a^t M^{-1}a,\quad \forall a\in \Z^{N_k},\;\forall k\gg 1.
\]

Hence, 

\begin{equation}\label{mcmleq}
\frac{\sum_{a\in \Z^N}  \left(a^t M^{-1} C M^{-1} a\right) e^{-\pi  a^t  M^{-1} a }}{\sum_{a\in \Z^N}   e^{-\pi  a^t  M^{-1} a }  }\leq  \delta k^n 
\frac{\sum_{a\in \Z^N}  \left(a^t M^{-1}  a\right) e^{-\pi  (a^t  M^{-1} a) }}{\sum_{a\in \Z^N}   e^{-\pi  a^t  M^{-1} a }  }
\end{equation}


 By assumption,
there is $k_0\geq 1$ and $0<\alpha<1$, such that for any $k\geq k_0$, the $\Z$-module
$H^0(\X,k\LL)$ is generated by sections $e_1,\ldots,e_{N_k}$ such that 
$\|e_i\|_{\sup,k\phi}\leq \alpha^k$ for $i=1,2,\ldots,
N_k$. Let $\{e_1^\vee, \ldots, e_{N_k}^\vee\}$ be the $\Z$-basis of 
$H^0(\X,k\LL)^{\vee}:=
\mathrm{Hom}_\Z(H^0(\X,k\LL),\Z)$, dual to $\{e_1,\ldots,e_{N_k}\}$.\\

Let $k\geq k_0$. 
Let $v^\vee \in  H^0(\X,k\LL)^{\vee}$. We write $v^\vee=\sum_{i=1}^{N_k} a_i e_i^\vee$ where 
$a_1,\ldots,a_{N_k}\in \Z$.  
Set 
$a=(a_1,\ldots,a_{N_k})^t$. We have

\[
\begin{split}
(a^t M^{-1}a)^{\frac{1}{2}}=&\|v^\vee\|_{\overline{H^0(\X,k\LL)}^\vee_{(\mu, k\phi)}}\\
=&\underset{u\in H^0(\X,k\LL)\otimes_\Z\R }{\sup } \frac{|v^\vee(u)|}{\|u\|_{(\mu,k\phi)}}\\
& \geq \max_{i=1,\ldots,N_k} \frac{|a_i|}{\|e_i\|_{(\mu,k\phi)}} \\
&\geq \frac{1}{N_k}\left( \sum_{i=1}^{N_k} 
\frac{|a_i|^2}{\|e_i\|_{(\mu,k\phi)}^2}\right)^{\frac{1}{2}}\
\\&\geq 
\frac{1}{N_k \alpha^k} \left(\sum_{i=1}^{N_k} |a_i|^2\right)^{\frac{1}{2}},
\end{split}
\]
(where ${\overline{H^0(\X,k\LL)}^\vee_{(\mu, k\phi)}}$ denotes  the  hermitian vector bundle dual to $\overline{H^0(\X,k\LL)}_{(\mu,k\phi)}$).

Hence,
\begin{equation}\label{ineq1}
a^t M^{-1}a \geq \frac{1}{N_k^2\alpha^{2k}} a^t a.
\end{equation}

From this inequality, we get
 \[(1-\alpha^{2k})a^tM^{-1} a\geq \frac{a^t a}{N_k^2}(\frac{1}{\alpha^{2k}}-1).\]
That is
 \[a^t M^{-1}a
 \geq (\frac{1}{\alpha^{2k}}-1) \frac{a^t a}{ N_k^2}+\alpha^{2k} a^t M^{-1} a.\]

On the other hand, we have 
\[
\|v^\vee\|_{\overline{H^0(\X,k\LL)}^\vee_{(\mu,k\phi)}}\leq N_k \max_i |a_i|\|
e_i^\vee\|_{(\mu,k\phi)}\leq N_k \left(\sum_i |a_i|^2 \|e_i^\vee\|^2 
_{(\mu,k\phi)}\right)^{\frac{1}{2}}.
\]


We have 
\[
\begin{split}
\sum_{a\in \Z^{N_k}}  \left(a^t M^{-1}  a\right) e^{-\pi  a^t  M^{-1} a } &\leq  \sum_{a\in \Z^{N_k}} \left(a^t M^{-1}  a\right) e^{-\pi \left( \frac{(\alpha^{-2k}-1)}{N_k^2} a^ta+   \alpha^{2k}a^t   M^{-1} a \right)} \quad\text{by \eqref{ineq1}}\\
&\leq  e^{-\pi (\frac{\alpha^{-2k}-1}{N_k^2})} \sum_{a\in \Z^{N_k}\setminus\{0\}}  \left(a^t M^{-1}  a\right) e^{-\pi  \alpha^{2k} a^t  M^{-1} a }\\
&\leq  \frac{N_k}{2\pi \alpha^{2k}}e^{-\pi (\frac{\alpha^{-2k}-1}{N_k^2})}   \sum_{a\in \Z^{N_k}}   e^{-\pi \alpha^{2k} a^t  M^{-1} a }\quad \text{by \eqref{ineq2}}\\
&\leq   \frac{N_k}{2\pi \alpha^{2k}}e^{-\pi (\frac{\alpha^{-2k}-1}{N_k^2})}   \sum_{a\in \Z^{N_k}}  
e^{-\frac{\pi}{N_k^2} a^ta} \quad\text{by} \; \eqref{ineq1}\\
&= \frac{N_k}{2\pi \alpha^{2k}}e^{-\pi (\frac{\alpha^{-2k}-1}{N_k^2})} \left( \sum_{n\in \Z} e^{-\frac{\pi}{N_k^2}n^2} \right)^{N_k}\\
&= \frac{N_k}{2\pi \alpha^{2k}}e^{-\pi (\frac{\alpha^{-2k}-1}{N_k^2})} N_k^{N_k}
\left( \sum_{n\in \Z} e^{-\pi N_k^2 n^2} \right)^{N_k} \\
 &\leq \frac{N_k^{N_k+1}}{ 2\pi \alpha^{2k}}e^{-\pi (\frac{\alpha^{-2k}-1}{N_k^2})} 
\left(1+2e^\pi e^{-\pi N_k^2} \sum_{n=1}^\infty e^{-\pi n^2}\right)^{N_k}
\end{split}
\]
where we have used $\theta(t)=\frac{1}{\sqrt{t}}\theta(\frac{1}{t})$ where 
$\theta(t)=\sum_{n\in \Z} e^{-\pi n^2t}\quad (t>0).$

Note that 
    \[
    \lim_{k\rightarrow \infty} \left(1+2e^\pi e^{-\pi N_k^2} \sum_{n=1}^\infty e^{-\pi n^2}\right)^{N_k}=1.
    \]
Let $l\in \R$. We have
    \[
\lim_{k\rightarrow \infty}    \frac{\alpha^{2(k+1) } N_{k+1}^l\log N_{k+1}}{\alpha^{2k} N_k^l \log N_k}=\alpha^2,
    \]
     where    we have used that  $N_k=c k^n+O(k^{n-1})$.
     
      By letting, 
   $x_k=1-\frac{1}{\pi} \alpha^{2k} N_k^3\log N_k$ for any $k\in \N$, we deduce that
  \[
  \lim_{k\rightarrow \infty} x_k=1.
  \]     
     
  Note that
      \[
  N_k\log N_k-\frac{\pi}{N_k^2 \alpha^{2k}}=-\frac{\pi}{N_k^2\alpha^{2k}} x_k.
    \]

    Then, for any $l\in \R$,
    \[
    \begin{split}
   & \frac{\frac{N_{k+1}^{l+N_{k+1}}}{\alpha^{2(k+1)}} e^{-\pi (\frac{\alpha^{-2(k+1)}-1}{N_{k+1}^2})}  }{\frac{N_k^{l+N_k}}{\alpha^{2k}}e^{-\pi (\frac{\alpha^{-2k}-1}{N_k^2})} }\\
    =& 
  \alpha^{-2}\frac{N_{k+1}^l }{N_k^l}  
  \exp\left( N_{k+1}\log N_{k+1}-N_k \log N_k -\pi \frac{1}{\alpha^{2(k+1)}N_{k+1}^2 }  +\pi \frac{1}{\alpha^{2k}N_{k}^2 } +\frac{\pi}{N_{k+1}^2} -\frac{\pi}{N_{k}^2}  \right)  \\
  =& \alpha^{-2}\frac{N_{k+1}^l }{N_k^l}   \exp(\frac{\pi}{N_{k+1}^2} -\frac{\pi}{N_{k}^2})   \exp\left( N_{k+1}\log N_{k+1}-N_k \log N_k -\pi \frac{1}{\alpha^{2(k+1)}N_{k+1}^2 }  +\pi \frac{1}{\alpha^{2k}N_{k}^2 }  \right)  \\
    =& \alpha^{-2}\frac{N_{k+1}^l }{N_k^l}   \exp(\frac{\pi}{N_{k+1}^2} -\frac{\pi}{N_{k}^2})   \exp\left(  -\frac{\pi}{\alpha^{2(k+1)}N_{k+1}^2 } x_{k+1}+\frac{\pi}{\alpha^{2k}N_{k}^2 } x_{k}  \right)  \\
    =& \alpha^{-2}\frac{N_{k+1}^l }{N_k^l}   \exp(\frac{\pi}{N_{k+1}^2} -\frac{\pi}{N_{k}^2}) \exp\left(-\pi\frac{x_{k+1} }{N_{k+1}^2 \alpha^{2k+2}} \left(1-\alpha^2\frac{x_k N_{k+1}^2}{x_{k+1} N_k^2}\right) \right).
    \end{split}
    \]
    
That is
    
    \[
    \frac{\frac{N_{k+1}^{l+N_{k+1}}}{\alpha^{2(k+1)}} e^{-\pi (\frac{\alpha^{-2(k+1)}-1}{N_{k+1}^2})}  }{\frac{N_k^{l+N_k}}{\alpha^{2k}}e^{-\pi (\frac{\alpha^{-2k}-1}{N_k^2})} }=
    \alpha^{-2}\frac{N_{k+1}^l }{N_k^l}   \exp(\frac{\pi}{N_{k+1}^2} -\frac{\pi}{N_{k}^2}) \exp\left(-\pi\frac{x_{k+1} }{N_{k+1}^2 \alpha^{2k+2}} \left(1-\alpha^2\frac{x_k N_{k+1}^2}{x_{k+1} N_k^2}\right) \right)
    \]
    It follows that
        \[
    \frac{\frac{N_{k+1}^{l+N_{k+1}}}{\alpha^{2(k+1)}} e^{-\pi (\frac{\alpha^{-2(k+1)}-1}{N_{k+1}^2})}  }{\frac{N_k^{l+N_k}}{\alpha^{2k}}e^{-\pi (\frac{\alpha^{-2k}-1}{N_k^2})} }
     \sim  \alpha^{-2} \exp\left(-\frac{\pi}{N_{k+1}^2\alpha^{2k+2}}(1-\alpha^2)  \right),
    \]
    as $k\rightarrow \infty.$\\

Since
        $
    \lim_{k\rightarrow \infty} N_k^2 \alpha^{2k}=0$, and $0<\alpha<1$, we deduce that
    \[
    \lim_{k\rightarrow \infty}  \frac{\frac{N_{k+1}}{\alpha^{2(k+1)}}e^{-\pi (\frac{\alpha^{-2(k+1)}-1}{N_{k+1}^2})} N_{k+1}^{N_{k+1}} }{\frac{N_k}{\alpha^{2k}}e^{-\pi (\frac{\alpha^{-2k}-1}{N_k^2})} N_k^{N_k} }=0.
    \]

We conclude, using \eqref{mcmleq}, that 
\[
\lim_{k\rightarrow \infty} \frac{1}{k^n}\frac{\sum_{a\in \Z^N}  \left(a^t M^{-1} C M^{-1} a\right) e^{-\pi  a^t  M^{-1} a }}{\sum_{a\in \Z^N}   e^{-\pi  a^t  M^{-1} a }  }=0.
\]

     Then, the theorem follows from \eqref{thetrho}.
     \end{proof}

\begin{theorem}\label{Mor}
Let $\mu$ be a probability measure which has the Bernstein-Markov property with respect to the metric of $\LL$. We have 
\[
\widehat{\mathrm{vol}}(\overline{\LL})=\widehat{\mathrm{vol}}_{(\mu,k\phi)}(\overline{\LL}).
\]

\end{theorem}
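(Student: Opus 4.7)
The plan is to reduce Theorem \ref{Mor} to Theorem \ref{thm3.1} via the two-sided inequality \eqref{ArTheta} that compares the arithmetic $h^0_{\mathrm{Ar}}$ to the theta invariant $h^0_\theta$. First I would unpack the definitions: the $L^2$-arithmetic volume is
\[
\widehat{\mathrm{vol}}_{(\mu,\phi)}(\overline{\LL})=\underset{k\rightarrow\infty}{\limsup}\;\frac{h^0_{\mathrm{Ar}}\bigl(\overline{H^0(\X,k\LL)}_{(\mu,k\phi)}\bigr)}{k^{n+1}/(n+1)!},
\]
while the usual arithmetic volume uses $\overline{H^0(\X,k\LL)}_{\sup,k\phi}$ in place of the $L^2$ object.

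Next I would apply \eqref{ArTheta} to both hermitian lattices $\overline{H^0(\X,k\LL)}_{(\mu,k\phi)}$ and $\overline{H^0(\X,k\LL)}_{\sup,k\phi}$. Since $\mathrm{rk}\;H^0(\X,k\LL)=N_k=O(k^n)$, the correction term $\tfrac{1}{2}\mathrm{rk}\cdot\log\mathrm{rk}$ is $O(k^n\log k)$, which is $o(k^{n+1})$. Dividing by $k^{n+1}/(n+1)!$ then gives a vanishing contribution, so
\[
\widehat{\mathrm{vol}}_{(\mu,\phi)}(\overline{\LL})=\underset{k\rightarrow\infty}{\limsup}\;\frac{h^0_\theta\bigl(\overline{H^0(\X,k\LL)}_{(\mu,k\phi)}\bigr)}{k^{n+1}/(n+1)!},
\]
and the analogous identity holds for the sup-norm side.

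Finally, since $\mu$ has the Bernstein-Markov property with respect to $\|\cdot\|_\phi$, Theorem \ref{thm3.1} applies and tells us that both of the above theta limsups coincide and are equal to $\widehat{\mathrm{vol}}(\overline{\LL})$. Combining with the arithmetic/theta comparison from the previous step yields the claimed equality.

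There is no genuinely hard step in this argument; it is essentially bookkeeping. The main point to be careful about is that the Bernstein-Markov hypothesis is precisely what is needed in the proof of Theorem \ref{thm3.1} to absorb the discrepancy between the $L^2$-norm $\|\cdot\|_{(\mu,k\phi)}$ and the sup-norm $\|\cdot\|_{\sup,k\phi}$ into a factor of order $e^{\eps k}$, which after applying Lemma \ref{lemma1} contributes only $\eps k O(k^n)$ to the theta invariants and hence $O(\eps)$ after normalization; the arbitrariness of $\eps$ then forces equality. Beyond verifying this is correctly quoted, no new estimate is required.
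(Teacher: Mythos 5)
Your argument is correct, but it takes a different route from the paper: the paper's proof of Theorem \ref{Mor} is just a pointer to Moriwaki's direct counting argument (\cite[Lemma 2.1]{Moriwaki}), which compares $\#\{s:\|s\|_{(\mu,k\phi)}\le 1\}$ and $\#\{s:\|s\|_{\sup,k\phi}\le 1\}$ head-on, using the trivial bound $\|\cdot\|_{(\mu,k\phi)}\le\|\cdot\|_{\sup,k\phi}$ (as $\mu$ is a probability measure), the Bernstein--Markov distortion $\|\cdot\|_{\sup,k\phi}\le Ce^{\eps k}\|\cdot\|_{(\mu,k\phi)}$, and a standard estimate for how the number of small lattice points changes when the norm is rescaled by $e^{\eps k}$, with the rank $N_k=O(k^n)$ absorbing the error into $o(k^{n+1})$. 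You instead convert $h^0_{\mathrm{Ar}}$ of the $L^2$-lattices into $h^0_\theta$ via \eqref{ArTheta}, noting that the correction $\tfrac12 N_k\log N_k+O(1)=O(k^n\log k)$ is negligible after dividing by $k^{n+1}$, and then invoke Theorem \ref{thm3.1} (already proved, and using the Bernstein--Markov hypothesis exactly where you say it is used), so there is no circularity and the normalization bookkeeping is right. What your route buys is self-containment within the paper's own theta machinery — the $L^2$-versus-sup comparison has already been paid for once in the proof of Theorem \ref{thm3.1}, so you reuse it rather than repeating a Moriwaki-style count; what the paper's route buys is independence from the theta formalism and from Bost's inequality \eqref{ArTheta}. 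The only caveat is that $\widehat{\mathrm{vol}}_{(\mu,k\phi)}(\overline{\LL})$ is never defined in the paper; your reading of it as the normalized limsup of $h^0_{\mathrm{Ar}}$ of the lattices $\overline{H^0(\X,k\LL)}_{(\mu,k\phi)}$ is the natural one and is consistent with the cited lemma of Moriwaki, but it is worth stating explicitly, as you do.
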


\begin{proof}
The proof is essentially the same as in \cite[Lemma 2.1]{Moriwaki}.
\end{proof}

\begin{theorem}\label{main}
Let $(\LL, \|\cdot\|_\phi)$ and 
$(\LL, \|\cdot\|_\psi)$ be two $\mathcal{C}^\infty$ hermitian line bundles on $\X$. We assume $\psi\leq \phi$ and  $(\LL, \|\cdot\|_\psi)$ is generated by small sections. We have 
\[
\widehat{\mathrm{vol}}(\LL, \|\cdot\|_\phi)-\widehat{\deg}(\hat c_1( \LL, \|\cdot\|_{P_X\phi})^{n+1})
=
\widehat{\mathrm{vol}}(\LL, \|\cdot\|_\psi)-\widehat{\deg}(\hat c_1(\LL, \|\cdot\|_{P_X\psi })^{n+1}).
\]

\end{theorem}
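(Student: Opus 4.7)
The plan is to connect $\psi$ and $\phi$ by the affine path $\phi_t := (1-t)\psi + t\phi$ with $t\in[0,1]$, and to show that both sides of the claimed equality vary along this path by the same integral against $\phi-\psi$, so that their difference stays constant. Fixing a smooth probability measure $\mu$ on $X$ (which automatically has the Bernstein-Markov property), I would apply Proposition \ref{difference} to the weights $k\phi$ and $k\psi$ on $k\LL$ to obtain
\[
h_\theta^0\bigl(\overline{H^0(\X,k\LL)}_{(\mu,k\phi)}\bigr) - h_\theta^0\bigl(\overline{H^0(\X,k\LL)}_{(\mu,k\psi)}\bigr) = k\int_0^1 \int_X (\phi-\psi)\,\Theta(\mu,k\phi_t)\,d\mu\,dt.
\]
Dividing by $k^{n+1}/(n+1)!$ and letting $k\to\infty$, Theorem \ref{thm3.1} turns the left-hand side into $\widehat{\mathrm{vol}}(\LL,\phi) - \widehat{\mathrm{vol}}(\LL,\psi)$.

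For the right-hand side, observe that $\phi_t \geq \psi$ implies that sections small with respect to $\psi$ remain small with respect to $\phi_t$, so the generated-by-small-sections hypothesis transfers to $(\LL,\phi_t)$, and the argument in the proof of Theorem \ref{ample} (which invokes only generated-by-small-sections together with smoothness) applies at each $t$, yielding $\Theta(\mu,k\phi_t)\,\mu/k^n \to \mu_{\mathrm{eq}}(\phi_t)$ weakly. The bound $\sup_X \Theta(\mu,k\phi_t) = O(k^n)$ from Corollary \ref{upperTheta}(ii), uniform in $t\in[0,1]$, lets me swap limit and integral, giving
\[
\widehat{\mathrm{vol}}(\LL,\phi) - \widehat{\mathrm{vol}}(\LL,\psi) = c_n \int_0^1 \int_X (\phi-\psi)\,\mathrm{MA}(P_X\phi_t)\,dt
\]
for a universal constant $c_n$ built from $(n+1)!$ and $\mathrm{vol}(L)$. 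For the arithmetic degree side, the formula \eqref{vardegree}, extended to integrable metrics as in \cite[Proposition 5.5.2]{Maillot}, applied along the path of psh weights $P_X\phi_t$, together with the fact that on the contact set $\{P_X\phi_t=\phi_t\}$ (which supports $\mathrm{MA}(P_X\phi_t)$) the derivative $(d/dt)P_X\phi_t$ coincides with $\phi-\psi$, yields the same integral with the same constant $c_n$ on the degree side. Subtracting the two identities produces the theorem; the reduction to $P_X\phi, P_X\psi$ on the volume side is consistent with the remark following Theorem \ref{thm3.1} which states that $\widehat{\mathrm{vol}}(\LL,\phi) = \widehat{\mathrm{vol}}(\LL,P_X\phi)$.

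The main obstacle is the last step: justifying the variation of the arithmetic self-intersection along the family of equilibrium weights $P_X\phi_t$, which are psh with minimal singularities but only upper semicontinuous, not smooth. This requires regularizing $P_X\phi_t$ by a decreasing sequence of smooth psh envelopes and using the continuity of the arithmetic intersection product for integrable metrics. A secondary difficulty is verifying that the two constants $c_n$ coincide exactly, which amounts to tracking the normalizations $\mu_{\mathrm{eq}} = \mathrm{MA}(P_X\phi)/\mathrm{vol}(L)$ and the Hilbert-Samuel asymptotic of $\dim H^0(\X,k\LL)$ through Steps 1--2.
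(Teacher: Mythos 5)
Your route is essentially the paper's: the same affine path $\phi_t=t\phi+(1-t)\psi$, Proposition \ref{difference} to express the variation of $h_\theta^0$ as $k\int_X(\phi-\psi)\int_0^1\Theta(\mu,k\phi_t)\,d\mu\,dt$, Theorem \ref{thm3.1} (with Theorem \ref{Mor}) on the left, and Theorem \ref{ample} together with the $O(k^n)$ bound of Corollary \ref{upperTheta} to pass to the limit under the integral, giving $\widehat{\mathrm{vol}}(\LL,\|\cdot\|_\phi)-\widehat{\mathrm{vol}}(\LL,\|\cdot\|_\psi)=\int_X(\phi-\psi)\int_0^1\mu_{\mathrm{eq}}(X,\phi_t)\,dt$. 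The one step you single out as the ``main obstacle''--- converting this last integral into the difference $\widehat{\deg}(\hat c_1(\LL,\|\cdot\|_{P_X\phi})^{n+1})-\widehat{\deg}(\hat c_1(\LL,\|\cdot\|_{P_X\psi})^{n+1})$ by differentiating along the non-smooth envelope path $P_X\phi_t$ --- is exactly where the paper does not argue from scratch: it cites Berman--Boucksom (their Proposition 4.1, the differentiability of the energy at equilibrium), which identifies $\int_X(\phi-\psi)\int_0^1\mu_{\mathrm{eq}}(X,\phi_t)\,dt$ with the (normalized) difference of Monge--Amp\`ere energies of the equilibrium weights, and then applies the variation formula \eqref{vardegree}, extended to integrable metrics via Maillot, to the two psh weights $P_X\phi$, $P_X\psi$ directly; no differentiation of the arithmetic degree in $t$ is needed, and the normalization matches because \eqref{vardegree} gives $(n+1)$ times the mixed energy, i.e.\ $(n+1)\,\mathrm{vol}(L)$ times the energy at equilibrium. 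Your contact-set heuristic for $(d/dt)P_X\phi_t$ is precisely the content of that cited theorem, and making it rigorous by regularization would amount to reproving it, so you should close the gap by citation rather than by the proposed regularization. Two smaller points: the paper perturbs the path to $t(\phi+\eps)+(1-t)\psi$ so that the intermediate metrics are generated by \emph{strictly} small sections (your monotonicity remark $\phi_t\geq\psi$ gives smallness but not strictness at the endpoint), and Theorem \ref{ample} as stated is for ample hermitian bundles, so this $\eps$-adjustment is what legitimizes invoking it along the whole path.
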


\begin{proof}
Let $\eps>0$. 
Let $t\in [0,1]$, the hermitian line 
 $(\LL,\|\cdot\|_{t(\phi+\eps)+(1-t) \psi})$ 
 is generated by strictly small sections.
 
Combining  Proposition \ref{difference},  Corollary \ref{upperTheta}, Theorems \ref{ample} and 
\ref{Mor}, we get

\[
\widehat{\mathrm{vol}}(\LL, \|\cdot\|_\phi)-
\widehat{\mathrm{vol}}(\LL, \|\cdot\|_\psi)=\int_X (\phi-\psi)\int_0^1 \mu_{eq}(X,\phi_t)dt.
\] 
 It is known that
 \[
 \int_X (\phi-\psi)\int_0^1 \mu_{eq}(X,\phi_t)dt=
 \int_X (\phi-\psi) \sum_{i=0}^{n} (dd^c  P_X\phi)^i 
 (dd^c P_X\psi)^{n-1},
 \]
(see \cite[Proposition 4.1]{BermanBoucksom}).\\

Using \eqref{vardegree}, we conclude the proof
of the theorem.

\end{proof}

\section{A generalized Hodge index theorem on arithmetic toric varieties over $\Z$ }

Let $\X$ be an arithmetic toric variety over $\Z$. 
Let $\LL$ be an equivariant line bundle on $\X$.  It is known that 
$\LL$ possesses a canonical  and continuous metric which is  described uniquely in terms of the combinatorial structure of $\X$ (see \cite{Maillot, Zhang}). Let $\phi_\infty$ be  the weight of the canonical metric of $\LL$. We denote by 
$\|\cdot\|_{\phi_\infty}$ the canonical metric of
$\LL$, and we set
 $\overline{\LL}_\infty:=(\LL,\|\cdot\|_{\phi_\infty})$.

\begin{example}\label{canonical}
Let $n\in \N^\ast$. We endow the line bundle  $\mathcal{O}(1)$ on $\p^n$ with the metric
\[
 \|s(x)\|_{\phi_\infty}=\frac{|s(x)|}{\max(|x_0|,\ldots,|x_n|)},
\]
where  $s$ is  a local holomorphic section of $\mathcal{O}(1)$. Then $\vc_{\phi_\infty}$
is  admissible.
\end{example}

 We assume moreover  that $\LL$  is big. If $\LL$ is generated by its global sections, then the current $c_1(\overline{\LL}_\infty)$ is semi-positive, and it defines a measure $\mu_\LL$ on $X$:

\[
\mu_{\LL}
:=\frac{1}{\mathrm{vol}(L)} c_1(\overline{\LL}_\infty)^n.
\]

One can attach to $\LL$  a convex polytope 
denoted by $\Delta_\LL$ which describes 
the global sections of $k\LL$, more precisely 
\[
H^0(\X,k\LL)=\bigoplus_{m\in (k\Delta)\cap \Z^n} \Z \chi^m
\]
(see \cite{Fulton,Oda} for more details).

\begin{proposition}\label{producttoric}
Let $\X$ be an arithmetic  toric variety over $\Z$.  Let $\LL$ be an  equivariant big line
bundle generated by its global sections  on $\X$.   We consider the continuous hermitian line bundle  $\overline{\LL}_\infty=(\LL,\|\cdot\|_{\phi_\infty})$. Let $k\geq 1$. There exists an orthogonal basis $\{s_1,\ldots,s_{N_k}\}$ of elements of $  H^0(\X,k\LL)$ with respect to
the scalar product
$<\cdot,\cdot>_{(\mu_{\LL},k\phi_\infty)}$ such that
\[\int_X \frac{s_i(x) \overline{s_j(x)}}{\max(|s_1(x)|,\cdots,|s_N(x)|)^2} \mu_{\LL}=0\quad\forall 1\leq i\neq j\leq N_1.\]

\end{proposition}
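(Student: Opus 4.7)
The plan is to use the standard toric basis and exploit the compact-torus invariance of both $\mu_\LL$ and the canonical metric. Enumerate the lattice points $m_1,\ldots,m_{N_k}$ of $(k\Delta_\LL)\cap\Z^n$ and set $s_\ell:=\chi^{m_\ell}$, so that $\{s_1,\ldots,s_{N_k}\}$ is the standard $\Z$-basis of $H^0(\X,k\LL)$. I will establish both orthogonality statements simultaneously by a single character-orthogonality argument under the action of $T_c:=(S^1)^n\subset T_\C$.

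By construction of the canonical metric (see \cite{Maillot,Zhang}), the weight $\phi_\infty$ is $T_c$-invariant, hence so are the current $c_1(\overline{\LL}_\infty)^n$ and the measure $\mu_\LL$. In the canonical equivariant trivialization on the open orbit, $s_\ell(t\cdot x)=t^{m_\ell}s_\ell(x)$ for $t\in T_c$, so each $|s_\ell|$ and their maximum are $T_c$-invariant. Using $|t^{m_j}|=1$ and hence $\overline{t^{m_j}}=t^{-m_j}$, both the pointwise hermitian pairing $\langle s_i,s_j\rangle_{k\phi_\infty}$ and the scalar function $s_i\overline{s_j}/\max_\ell|s_\ell|^2$ transform by the factor $t^{m_i-m_j}$ under $T_c$. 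Applying the change of variables $x\mapsto t\cdot x$ and using $T_c$-invariance of $\mu_\LL$ gives
\[
I_1:=\int_X\langle s_i,s_j\rangle_{k\phi_\infty}\,d\mu_\LL=t^{m_i-m_j}I_1,\qquad I_2:=\int_X\frac{s_i(x)\overline{s_j(x)}}{\max_\ell|s_\ell(x)|^2}\,d\mu_\LL=t^{m_i-m_j}I_2,
\]
for every $t\in T_c$. When $i\neq j$ we have $m_i\neq m_j$, and choosing $t$ with $t^{m_i-m_j}\neq 1$ forces $I_1=I_2=0$, giving simultaneously the $L^2$-orthogonality and the second vanishing.

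The main subtlety is that the transformation formulas above are derived in the canonical trivialization on the open orbit, while $\mu_\LL$ may charge lower-dimensional orbit closures (indeed in the projective case the Monge--Amp\`ere measure of $\phi_\infty$ is concentrated on the compact torus). This is not a real obstruction: the change-of-variables identity $\int F\,d\mu_\LL=\int(F\circ t)\,d\mu_\LL$ follows directly from $T_c$-invariance of $\mu_\LL$ regardless of its support, and both integrands extend to $\mu_\LL$-integrable functions on $\X$ because $\mu_\LL$ is a finite Radon measure and the $s_\ell$ are bounded with respect to the canonical metric. Thus the entire argument reduces to the $T_c$-invariance of $\phi_\infty$, which is built into the definition of the canonical metric on an equivariant line bundle.
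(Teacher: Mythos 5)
Your proof is correct, and it proves exactly what the proposition asserts. The route is close in spirit to the paper's but not identical: you use the monomial basis $\chi^{m}$ and a soft equivariance argument (the integrands transform by the nontrivial character $t^{m_i-m_j}$ under the compact torus, while $\phi_\infty$ and $\mu_{\LL}$ are invariant, forcing the integrals to vanish), whereas the paper computes the integrals outright, invoking the explicit formula $\|s\|_{\phi_\infty}=|s|/\max_{m}|\chi^{m}|$ together with \cite[Corollaire 6.3.5]{Maillot}, which identifies integration against $\mu_{\LL}$ with integration against the Haar measure on the compact torus $S_{X}$, and then applies character orthonormality on $S_{X}$. The difference in strength matters downstream: the paper's computation yields $\delta_{m,m'}$, i.e.\ the monomials are \emph{orthonormal} for $\langle\cdot,\cdot\rangle_{(\mu_{\LL},k\phi_\infty)}$, and this normalization (not just orthogonality) is what is used in Theorem \ref{vol}(ii) to evaluate the theta sum as $\bigl(\sum_{n\in\Z}e^{-\pi n^{2}}\bigr)^{N_k}$; your symmetry argument, being insensitive to normalization, gives only the vanishing for $i\neq j$, which suffices for the proposition as stated but would need to be supplemented (e.g.\ by the Haar-measure identification or a direct computation of $\|\chi^{m}\|_{(\mu_{\LL},k\phi_\infty)}$) to support that later application. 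Your handling of the support issue (invariance of $\mu_{\LL}$ regardless of where it is concentrated, boundedness of the integrands) is sound, and your argument has the mild advantage of not requiring the explicit description of $\mu_{\LL}$ as Haar measure on $S_X$.
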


\begin{proof}
We know that

\[
\|s\|_{\phi_\infty}(x)=\frac{|s(x)| }{ \max_{m\in \Delta_{\LL}\cap \Z^n}(|\chi^{m}(x)|)},\quad \forall \;s\in H^0(X,L),\quad\forall x\in X,
\]
 (see \cite[\S 3.3.3]{Maillot}). 

 Let  $m, m'
\in \Delta_{\LL}\cap \Z^n$. From \cite[Corollaire 6.3.5]{Maillot} and the fact that $|\chi^m(x)|=1$ on $S_{X}$   the compact torus  of $X$, we get
\[
\int_X \frac{\chi^{m} (x) \overline{\chi^{m'}(x)}}{\max(|\chi^{m_1}(x)|,\cdots,|\chi^{m_N}(x)|)^2} \mu_{\LL}
=\int_{S_{X}} \chi^{m}(x) \overline{\chi^{m'}(x)} \mu_{\LL}=  \delta_{m,m'},
\]

\end{proof}


\begin{theorem}\label{vol}
Let $\X$ be an arithmetic  toric variety over $\Z$.  Let $ \overline{\LL}_\infty=(\LL,\|\cdot\|_{\phi_\infty})$ be an  equivariant ample line
bundle on $\X$.  We have

\begin{enumerate}[label=(\roman*)]

\item

\[
\sup_{x\in X}\rho(\mu_{\LL},k \phi_\infty)(x)=
 \#((k\Delta_L)\cap \Z^n)=O(k^n)\quad\forall k\in \N.
\]

\item 
\[
\underset{k\rightarrow \infty}{\lim} \frac{h_\theta^0(
\overline{H^0(\X,k\mathcal{L})}_{(\mu_\LL, k\phi_\infty )})}{k^{n}/n!} = \mathrm{vol}(\LL) \log \sum_{n\in \Z} e^{-\pi n^2}, 
\]

\item
\[
\widehat{\mathrm{vol}}(\overline{\LL}_{\infty})=0.
\]
\end{enumerate}

\end{theorem}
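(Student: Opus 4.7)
The plan is to exploit the character decomposition of $H^0(\X,k\LL)$ together with Proposition \ref{producttoric} applied to $k\LL$ in place of $\LL$. The pivotal observation is that $\mu_{k\LL}=\mu_\LL$ (the factor $k^n$ in $c_1(k\overline{\LL}_\infty)^n$ cancels against $\mathrm{vol}(k\LL)=k^n\mathrm{vol}(\LL)$), and the canonical metric $k\phi_\infty$ on $k\LL$ is encoded by the scaled polytope $k\Delta_\LL$. Re-running the computation of Proposition \ref{producttoric} for $k\LL$ then shows that $\{\chi^m\}_{m\in k\Delta_\LL\cap \Z^n}$ is an \emph{orthonormal} basis of $H^0(\X,k\LL)$ with respect to $\langle\cdot,\cdot\rangle_{(\mu_\LL,k\phi_\infty)}$. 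This identification is what drives all three items.

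Part (i) falls out directly. Using the orthonormal basis above, the distortion function reads
\[
\rho(\mu_\LL,k\phi_\infty)(x) \;=\; \sum_{m\in k\Delta_\LL\cap\Z^n} \frac{|\chi^m(x)|^2}{\max_{m'\in k\Delta_\LL\cap\Z^n}|\chi^{m'}(x)|^2}.
\]
Every summand is bounded by $1$, and on the compact torus $S_{\X(\C)}$ one has $|\chi^m|\equiv 1$, so every summand equals $1$ there. Consequently $\sup_X\rho = \#(k\Delta_\LL\cap\Z^n)$, and this is $O(k^n)$ by the Ehrhart asymptotic.

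For part (ii), orthonormality makes the theta series factor completely:
\[
\sum_{v\in H^0(\X,k\LL)} e^{-\pi \|v\|^2_{(\mu_\LL,k\phi_\infty)}}
\;=\; \biggl(\sum_{n\in\Z} e^{-\pi n^2}\biggr)^{N_k},
\]
so $h_\theta^0 = N_k \log\sum_{n\in\Z} e^{-\pi n^2}$. Since $N_k=\mathrm{vol}(\LL)\cdot k^n/n!+O(k^{n-1})$ by the standard asymptotic for toric ample bundles, dividing by $k^n/n!$ yields the claimed limit $\mathrm{vol}(\LL)\log\sum_{n\in\Z}e^{-\pi n^2}$.

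For part (iii), I would first use (i) to verify the Bernstein--Markov property of $\mu_\LL$ relative to $\|\cdot\|_{\phi_\infty}$: the bound $\sup_X\rho^{1/2}=O(k^{n/2})=o(e^{k\eps})$ for every $\eps>0$ does the job. Theorem \ref{thm3.1} then gives $\widehat{\mathrm{vol}}(\overline{\LL}_\infty)=\lim_k h_\theta^0/(k^{n+1}/(n+1)!)$, and by (ii) the numerator is only of order $k^n$, so this limit vanishes. The only subtle point I anticipate is confirming that the canonical metric on $k\LL$ is described by the scaled polytope, i.e.\ $(\max_{m\in\Delta_\LL\cap\Z^n}|\chi^m|)^k=\max_{m'\in k\Delta_\LL\cap\Z^n}|\chi^{m'}|$; this rests on the vertices of $\Delta_\LL$ being lattice points (automatic for an ample equivariant line bundle on a smooth toric variety) together with the fact that the maximum of $|\chi^{\bullet}|$ over a polytope is attained at a vertex.
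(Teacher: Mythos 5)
Your proposal is correct and follows essentially the same route as the paper: orthonormality of the characters $\chi^m$, $m\in k\Delta_\LL\cap\Z^n$, with respect to $\langle\cdot,\cdot\rangle_{(\mu_\LL,k\phi_\infty)}$ (Proposition \ref{producttoric}), the resulting Bergman-kernel formula for (i), the factorization of the theta series for (ii), and Theorem \ref{thm3.1} for (iii). Your explicit check of the Bernstein--Markov hypothesis via (i) and of the identity $\bigl(\max_{m\in\Delta_\LL\cap\Z^n}|\chi^m|\bigr)^k=\max_{m'\in k\Delta_\LL\cap\Z^n}|\chi^{m'}|$ merely makes explicit steps the paper leaves implicit.
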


\begin{proof}
\item

\begin{enumerate}[label=(\roman*)]

\item 
Recall that

\[
\rho(\mu_{\LL},k \phi_\infty)(x)=\sup_{s\in H^0(X, k L)} 
\frac{\|s(x)\|^2_{\phi_\infty}}{\|s\|^2_{(\mu_{\LL},\phi_\infty)}},
\]

Let $s\in H^0(X,kL)\setminus\{0\}$. We have  $s=\sum_{m\in (k\Delta)\cap\Z^n} a_m \chi^m$ where $a_m\in \C$ for any $m\in (k\Delta)\cap\Z^n$.  We have

\[
\begin{split}
\frac{\|s(x)\|^2_{k\phi_\infty}}{\|s\|^2_{(\mu_\LL,k\phi_\infty)}  }=&\frac{\sum_{m,m'\in (k\Delta_L)\cap \Z^n} a_m \overline {a_{m'}} \chi^m(x)\overline{\chi^{m'}(x)} e^{-2 k \phi_\infty}   }{(\sum_{m\in (k\Delta_L)\cap \Z^n} |a_m|^2 )  }\\
&\leq \sum_{m\in (k\Delta_L)\cap \Z^n} \|\chi^m(x)\|^2_{k\phi_\infty}.
\end{split}
\]
We conclude that
\[
\rho(\mu_\LL,k\phi_\infty)(x)=\sum_{m\in (k\Delta_L)\cap \Z^n} \|\chi^m(x)\|^2_{k\phi_\infty}.
\]

It follows that
\[
\sup_{x\in \X(\C)}\rho(\mu_{\LL},k \phi_\infty)(x)=
 \#((k\Delta_L)\cap \Z^n)=O(k^n).
\]

\item
From Proposition \ref{producttoric}, we see that

\[
\begin{split}
\sum_{s\in H^0(\X,k\LL)}e^{-\pi \|s\|_{(\mu_\LL,k\phi_\infty)}^2}=&\sum_{ 
(a_m)_{m\in (k\Delta\cap \Z^n)}\in \Z^{N_k}} e^{-\pi \sum_{m\in (k\Delta)\cap\Z^n} a_m^2}\\
=& \left( \sum_{n\in \Z} e^{-\pi n^2} \right)^{N_k}.
\end{split}
\]
Hence
\[
\underset{k\rightarrow \infty}{\lim} \frac{h_\theta^0(
\overline{H^0(\X,k\mathcal{L})}_{(\mu, k\phi_\infty )})}{k^{n}/n!} = \mathrm{vol}(\LL) \log \sum_{n\in \Z} e^{-\pi n^2}.
\]
\item 
By Theorem \ref{thm3.1} and $(ii)$, we conclude that
\[
\widehat{\mathrm{vol}}(\overline{\LL}_{\infty})=\lim_{k\rightarrow \infty} \frac{N_k}{k^{n+1}}\log(\sum_{n\in \Z} e^{-\pi n^2} )=0.
\]

\end{enumerate}

\end{proof}

\begin{theorem}\label{thm314}
Let $\X$ be an arithmetic toric variety over $\Z$.  Let $(\LL,\|\cdot\|_\phi)$ be an  equivariant ample hermitian line
bundle on $\X$. We have

\[
\widehat{\mathrm{vol}}(\LL, \|\cdot\|_\phi)=\widehat{\deg} (\hat c_1(\LL, \|\cdot\|_{P_X\phi})^{n+1}). \]
 If moreover, the metric of 
 $(\LL,\| \cdot \|_\phi)$ 
 is semipositive, then
\[
\widehat{\mathrm{vol}}(\LL, \|\cdot\|_\phi)=\widehat{\deg}(\hat c_1(\LL, \|\cdot\|_{\phi})^{n+1}).\]

\end{theorem}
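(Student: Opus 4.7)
The plan is to apply Theorem \ref{main} with a suitable additive shift of the canonical toric weight $\phi_\infty$ on $\LL$ as the auxiliary weight $\psi$, reducing the identity to the two vanishings $\widehat{\mathrm{vol}}(\overline{\LL}_\infty)=0$ (Theorem \ref{vol}(iii)) and $\widehat{\deg}(\hat c_1(\overline{\LL}_\infty)^{n+1})=0$, the latter being classical for toric line bundles equipped with the canonical metric (\cite{Maillot}). Concretely, fix $\delta>0$ and choose $\epsilon>0$ large enough that $\phi+\epsilon\ge \phi_\infty+\delta$ pointwise on the compact space $X$; such an $\epsilon$ exists since $\phi-\phi_\infty$ is continuous on $X$. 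Put $\phi':=\phi+\epsilon$ and $\psi:=\phi_\infty+\delta$, so $\psi\le \phi'$. The characters $\chi^m$ with $m\in \Delta_\LL\cap \Z^n$ form a $\Z$-basis of $H^0(\X,\LL)$ and satisfy $\|\chi^m\|_{\sup,\phi_\infty}=1$ by the defining property of the canonical metric, hence $\|\chi^m\|_{\sup,\psi}=e^{-\delta}<1$; the same scaling applies to each tensor power $H^0(\X,k\LL)$, so $(\LL,\|\cdot\|_\psi)$ is generated by strictly small sections. Both hypotheses of Theorem \ref{main} therefore hold for $(\phi',\psi)$.

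Applying Theorem \ref{main} and using $P_X(\chi+t)=P_X(\chi)+t$ (so in particular $P_X\psi=\psi$, since $\phi_\infty$ is psh), I obtain
\[
\widehat{\mathrm{vol}}(\LL,\|\cdot\|_{\phi'})-\widehat{\deg}\bigl(\hat c_1(\LL,\|\cdot\|_{P_X\phi+\epsilon})^{n+1}\bigr)=\widehat{\mathrm{vol}}(\LL,\|\cdot\|_\psi)-\widehat{\deg}\bigl(\hat c_1(\LL,\|\cdot\|_\psi)^{n+1}\bigr).
\]
Both sides are affected by the same additive shift formulas
\[
\widehat{\mathrm{vol}}(\LL,\chi+t)-\widehat{\mathrm{vol}}(\LL,\chi)=(n+1)\,t\,\mathrm{vol}(L)=\widehat{\deg}\bigl(\hat c_1(\LL,\chi+t)^{n+1}\bigr)-\widehat{\deg}\bigl(\hat c_1(\LL,\chi)^{n+1}\bigr),
\]
where the first equality follows from Minkowski's lattice-point count applied to the sup-norm unit ball in $H^0(\X,k\LL)_\R$, and the second from the variation formula \eqref{vardegree} with $\varphi=-t$ together with the invariance of $c_1(\LL,\chi)$ under constant shifts of $\chi$. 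The $\epsilon$- and $\delta$-contributions therefore cancel within each side of the previous display, leaving
\[
\widehat{\mathrm{vol}}(\LL,\|\cdot\|_\phi)-\widehat{\deg}\bigl(\hat c_1(\LL,\|\cdot\|_{P_X\phi})^{n+1}\bigr)=\widehat{\mathrm{vol}}(\overline{\LL}_\infty)-\widehat{\deg}\bigl(\hat c_1(\overline{\LL}_\infty)^{n+1}\bigr)=0,
\]
which is the first claim. For the second statement, if $(\LL,\|\cdot\|_\phi)$ is semipositive then $\phi$ is itself a psh weight dominated by $\phi$, so by the definition of $P_X\phi$ as the upper envelope of such weights one has $P_X\phi=\phi$, and the second identity is a rewriting of the first.

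The principal obstacle is the setup step, namely the simultaneous verification of both hypotheses of Theorem \ref{main}: the condition $\psi\le \phi$ forces $\psi$ downwards, while the requirement that $(\LL,\|\cdot\|_\psi)$ be generated by strictly small sections forces it upwards. The toric structure is precisely what breaks this tension, through the explicit character basis of $H^0(\X,\LL)$ and the normalisation $\|\chi^m\|_{\sup,\phi_\infty}=1$, which has no counterpart on a general arithmetic variety. The other toric-specific input, the classical vanishing $\widehat{\deg}(\hat c_1(\overline{\LL}_\infty)^{n+1})=0$, is where the combinatorics of the polytope enter the argument.
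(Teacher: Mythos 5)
Your overall strategy is the same as the paper's: shift the given weight upward so that it dominates (a shift of) the canonical toric weight $\phi_\infty$, apply Theorem \ref{main} to compare with the canonical metric, and conclude from the two vanishings $\widehat{\mathrm{vol}}(\overline{\LL}_\infty)=0$ (Theorem \ref{vol}) and $\widehat{\deg}(\hat c_1(\overline{\LL}_\infty)^{n+1})=0$ (Maillot). The paper does exactly this with a multiplicative rescaling $\alpha\|\cdot\|_\phi\leq\|\cdot\|_{\phi_\infty}$ in place of your additive $\epsilon,\delta$. However, there is a genuine gap in the step where you remove the shifts: the claimed exact identity $\widehat{\mathrm{vol}}(\LL,\chi+t)-\widehat{\mathrm{vol}}(\LL,\chi)=(n+1)\,t\,\mathrm{vol}(L)$ is \emph{not} a consequence of ``Minkowski's lattice-point count.'' Counting lattice points in dilated sup-norm balls (or the theta-function monotonicity of Lemma \ref{lemma1}) only yields $\widehat{\mathrm{vol}}(\chi)\leq\widehat{\mathrm{vol}}(\chi+t)\leq\widehat{\mathrm{vol}}(\chi)+(n+1)t\,\mathrm{vol}(L)$; the reverse lower bound fails for general hermitian metrics (scale any metric up enough that $k\LL$ has no nonzero sections of sup-norm $\leq 1$ for all $k$: then $\widehat{\mathrm{vol}}=0$ and remains $0$ under small positive shifts of the weight, while $(n+1)t\,\mathrm{vol}(L)>0$). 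The equality you need on the $\phi$-side requires using that $(\LL,\|\cdot\|_\phi)$ is itself generated by small sections (this is part of the Moriwaki ampleness hypothesis), e.g.\ by controlling the successive minima of $\overline{H^0(\X,k\LL)}_{\sup,k\phi}$, or more economically by one further application of Theorem \ref{main} to the pair $(\phi+\epsilon,\phi)$, which is precisely how the paper phrases its second displayed identity (scale-invariance of $\widehat{\mathrm{vol}}-\widehat{\deg}$). On the canonical side your shift formula is fine, since the character basis makes the lattice explicitly isometric to a rescaled $\Z^{N_k}$, as in Theorem \ref{vol}. So the step is repairable with tools already in the paper, but as justified it does not stand.

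A second, smaller issue: Theorem \ref{main} is stated for $\mathcal{C}^\infty$ weights (its proof goes through Corollary \ref{upperTheta} and Theorem \ref{ample}, which need smoothness and the Bernstein--Markov property), whereas your $\psi=\phi_\infty+\delta$ is only continuous and piecewise smooth. The paper applies Theorem \ref{main} to the canonical metric as well, but explicitly inserts ``a continuity argument'' to bridge this; your write-up does not acknowledge that the hypothesis is violated, so you should either approximate $\phi_\infty$ by smooth semipositive weights and pass to the limit, or justify that Theorem \ref{main} extends to this class. The remaining points (the normalisation $\|\chi^m\|_{\sup,k\phi_\infty}=1$, strict smallness after the $\delta$-shift, $P_X(\chi+t)=P_X\chi+t$, $P_X\phi_\infty=\phi_\infty$, $P_X\phi=\phi$ in the semipositive case, and the exact shift of $\widehat{\deg}$ via \eqref{vardegree}) are correct.
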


\begin{proof}


There exists $0<\alpha\leq 1$ such that
\[
\alpha \|\cdot\|_\phi\leq \|\cdot\|_{\phi_\infty} 
\]
where $\|\cdot\|_{\phi_\infty}$ is the canonical metric
of $\LL$.\\

By Theorem \ref{main}, and 
 a continuity argument, we can  show that

\[
\widehat{\mathrm{vol}}(\LL, \alpha\|\cdot\|_\phi)-\widehat{\deg}(\hat c_1(\LL, \alpha \|\cdot\|_{P_X\psi })^{n+1}) =
\widehat{\mathrm{vol}}(\LL, \|\cdot\|_{\phi_\infty})-\widehat{\deg}(\hat c_1(\LL, \|\cdot\|_{P_X\phi_\infty})^{n+1}),
\]
and
\[
\widehat{\mathrm{vol}}(\LL, \alpha\|\cdot\|_\phi)-\widehat{\deg}(\hat c_1(\LL, \alpha \|\cdot\|_{P_X\phi })^{n+1})=
\widehat{\mathrm{vol}}(\LL, \|\cdot\|)-\widehat{\deg}(\hat c_1(\LL, \|\cdot\|_{P_X\phi })^{n+1}).
\]

By Theorem \ref{main}, (ii) of Theorem \ref{vol} and \cite[Proposition 7.1.1]{Maillot}, we deduce 
\[
\widehat{\mathrm{vol}}(\LL, \|\cdot\|)=\widehat{\deg}(\hat c_1(\LL, \|\cdot\|_{P_X\phi })^{n+1}).
\]

\end{proof}

\begin{theorem}\label{Hodge}[A generalized Hodge index theorem]
 Let $\X$ be an arithmetic toric variety over  $\Z$ of relative dimension $n$. Let  $\mathcal{L}$ be an equivariant ample line bundle  on $\X$.
We   assume that  the metric of  $\overline{\mathcal{L}}$ is  smooth  with semi-positive  first Chern form $c_1(\overline{\mathcal{L}})$  on $\X$, then
\[
\widehat{\mathrm{vol}}(\overline{\mathcal{L}})\geq \widehat{\deg}(\widehat c_1(\overline{\mathcal{L}})^{n+1}).
\]
\end{theorem}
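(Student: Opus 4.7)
My plan is to derive Theorem \ref{Hodge} as an immediate consequence of Theorem \ref{thm314}. Under the hypotheses of Theorem \ref{Hodge}, the smooth weight $\phi$ of $\overline{\mathcal{L}}$ is plurisubharmonic on $X=\X(\C)$ because $c_1(\overline{\mathcal{L}})$ is semi-positive. Consequently the equilibrium projection satisfies $P_X\phi = \phi$, and the second assertion of Theorem \ref{thm314} applies verbatim to give the equality
\[
\widehat{\mathrm{vol}}(\LL, \|\cdot\|_\phi)=\widehat{\deg}(\hat c_1(\LL, \|\cdot\|_{\phi})^{n+1}),
\]
which certainly implies the inequality claimed in Theorem \ref{Hodge}.

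To explain the substance underlying Theorem \ref{thm314}, the strategy I would follow is to reduce the semi-positive case to a single reference metric, namely the canonical equivariant metric $\|\cdot\|_{\phi_\infty}$ attached to the toric line bundle $\LL$. I would first pick $\alpha\in (0,1]$ with $\alpha \|\cdot\|_\phi \leq \|\cdot\|_{\phi_\infty}$, and then invoke Theorem \ref{main} twice: once to compare $(\LL,\alpha\|\cdot\|_\phi)$ with $(\LL,\|\cdot\|_{\phi_\infty})$, and once to compare it with $(\LL,\|\cdot\|_\phi)$. Both applications show that the quantity $\widehat{\mathrm{vol}}(\LL,\|\cdot\|) - \widehat{\deg}(\hat c_1(\LL,\|\cdot\|_{P_X\phi})^{n+1})$ is independent of the choice of smooth metric within the ample cone. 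Evaluating at $\overline{\LL}_\infty$ would then finish the job: Theorem \ref{vol}(iii) gives $\widehat{\mathrm{vol}}(\overline{\LL}_\infty)=0$, while \cite[Proposition 7.1.1]{Maillot} supplies the corresponding vanishing of the arithmetic self-intersection $\widehat{\deg}(\hat c_1(\overline{\LL}_\infty)^{n+1})=0$.

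The hardest part of this program is of course the variation formula of Theorem \ref{main}, which rests on the integral representation of Proposition \ref{difference}, the weak convergence $k^{-n}\Theta(\mu,k\phi)\mu \to \mu_{\mathrm{eq}}(X,\phi)$ from Theorem \ref{ample}, and the identification of $\widehat{\mathrm{vol}}$ with its $\theta$-analogue in Theorem \ref{thm3.1}. Once those inputs are in place, the passage to Theorem \ref{Hodge} itself is little more than an observation: semi-positivity of $c_1(\overline{\mathcal{L}})$ forces $P_X\phi=\phi$, and Theorem \ref{thm314} upgrades the desired Hodge-type inequality to a genuine equality.
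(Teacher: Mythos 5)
Your reduction has a genuine gap at the very first step: you apply the second assertion of Theorem \ref{thm314} ``verbatim'' to $(\LL,\|\cdot\|_\phi)$, but the hypotheses of Theorem \ref{Hodge} do not place you under the hypotheses of Theorem \ref{thm314}. In this paper ``ample hermitian line bundle'' is Moriwaki's arithmetic notion recalled in the introduction: besides geometric ampleness and positivity of the Chern form, $H^0(\X,k\LL)$ must be generated by strictly small sections ($\|s\|_{\sup}<1$) for $k\gg 1$; this smallness hypothesis is exactly what drives the chain Theorem \ref{main} $\rightarrow$ Theorem \ref{ample} (whose proof starts from a basis with $\|e_i\|_{\sup,k\phi}\leq \alpha^k$) on which Theorem \ref{thm314} rests. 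Theorem \ref{Hodge} only assumes that $\LL$ is (geometrically) ample and that $c_1(\overline{\LL})$ is semipositive, with no normalization of the metric at all. In fact the equality you deduce cannot hold at this level of generality: replacing $\|\cdot\|_\phi$ by $e^{C}\|\cdot\|_\phi$ for a large constant $C>0$ changes neither the curvature nor any hypothesis of Theorem \ref{Hodge}, yet by \eqref{vardegree} (applied with $\vf\equiv C$) the quantity $\widehat{\deg}(\hat c_1(\LL,e^{C}\|\cdot\|_\phi)^{n+1})$ decreases linearly in $C$ and tends to $-\infty$, while $\widehat{\mathrm{vol}}\geq 0$ always (the zero section is always small). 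So only the inequality can be true in general, and your observation that semipositivity forces $P_X\phi=\phi$, while correct, does not supply the missing smallness of sections.

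This is precisely why the paper does not quote Theorem \ref{thm314} directly for $\overline{\LL}$. Its proof first reduces, via generators $s_1,\ldots,s_{N_{k_0}}$ of the section algebra and the homogeneity of both $\widehat{\mathrm{vol}}$ and $\widehat{\deg}(\hat c_1(\cdot)^{n+1})$, to $k_0=1$, and then rescales the metric: choosing $0<\alpha<1$ with $\alpha\|s_i\|_{\sup,\phi}<1$ makes $(\LL,\alpha\|\cdot\|_\phi)$ arithmetically ample, so Theorem \ref{thm314} applies to the rescaled bundle and gives equality there. The return to the original metric is not an equality but a monotonicity statement, obtained by combining Proposition \ref{difference} with the bound $\Theta\leq\rho$ of Theorem \ref{thetarho}: shifting the weight by the constant $-\log\alpha$ changes $\widehat{\mathrm{vol}}$ by at most what it changes $\widehat{\deg}(\hat c_1(\cdot)^{n+1})$, whence
\[
\widehat{\mathrm{vol}}(\LL,\|\cdot\|_\phi)-\widehat{\deg}(\hat c_1(\LL,\|\cdot\|_\phi)^{n+1})\;\geq\;\widehat{\mathrm{vol}}(\LL,\alpha\|\cdot\|_\phi)-\widehat{\deg}(\hat c_1(\LL,\alpha\|\cdot\|_\phi)^{n+1})\;=\;0.
\]
Your proposal is missing both the rescaling step (to enter the range of validity of Theorem \ref{thm314}) and this comparison inequality (to come back), and without them the argument does not go through; the rest of your write-up, describing the internal mechanism of Theorem \ref{thm314}, is a paraphrase of the paper's proof of that theorem rather than of the step actually at issue.
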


\begin{proof}
There exist $k_0\in \N$, and $s_1,\ldots,s_{N_{k_0}}$ 
elements of $H^0(\X,k_0\LL)$ such that the $\Z$-algebra 
$\bigoplus_{k\geq 0} H^0(\X,kk_0\LL)$ is generated by $s_1,\ldots,s_{N_{k_0}}$.

Since, for any $k=1,2,\ldots,$
\[
\widehat{\mathrm{vol}}(k\overline{\mathcal{L}})=k^{n+1} 
\widehat{\mathrm{vol}}(\overline{\mathcal{L}})\quad\text{and}\quad \widehat{\deg}(\widehat c_1(k\overline{\mathcal{L}})^{n})=k^{n+1} \widehat{\deg}(\widehat c_1(\overline{\mathcal{L}})^{n}),
\]
we may assume that $k_0=1$. Let $\alpha$ be a positive real number such that
\[
0<\alpha<1\quad\text{and}\quad \alpha \|s_i\|_{\sup,\phi}<1\quad\text{for}\quad i=1,2,\ldots,N_1.\]
It follows   that the hermitian line bundle 
 $(\LL,\alpha \|\cdot\|_\phi)$ is ample. From Theorem \ref{thm314}, we get
 \[
\widehat{\mathrm{vol}}(\LL,  \alpha\|\cdot\|_\phi)=\widehat{\deg}( \hat c_1(\LL, \alpha\|\cdot\|_{\phi})^{n+1} ).
\]
 
The combination of  Proposition \ref{difference} and  Theorem \ref{thetarho} gives the following inequality
\[
\widehat{\mathrm{vol}}(\LL,  \|\cdot\|_\phi) -\widehat{\deg}(\hat c_1(\LL, \|\cdot\|_{\phi})^{n+1})\geq \widehat{\mathrm{vol}}(\LL,  \alpha\|\cdot\|_\phi)-\widehat{\deg}(\hat c_1(\LL, \alpha\|\cdot\|_{\phi})^{n+1}).\]
Therefore
\[
\widehat{\mathrm{vol}}(\LL,  \|\cdot\|_\phi)\geq 
\widehat{\deg}(\hat c_1(\LL, \|\cdot\|_{\phi})^{n+1}).
\]

\end{proof}

\bibliographystyle{plain} 

\bibliography{biblio}

\end{document}